\newcommand{\sign}{{\operatorname {sign}}}
\newcommand{\re}{{{\mathrm{Re}}~}}
\newcommand{\im}{{{\mathrm{Im}}~}}
\newcommand{\veps}{\varepsilon}
\newcommand{\pder}[2]{\frac{{\partial}#1}{{\partial}#2}}
\newcommand{\RR}{\mathbb{R}}
\newcommand{\DD}{\mathbb{D}}
\newcommand{\CC}{\mathbb{C}}
\newcommand{\kron}{\otimes}
\newtheorem{remark}[theorem]{Remark}
\begin{document}
\title{
An inverse iteration method
for eigenvalue problems with 
eigenvector nonlinearities 
}
\author{
Elias Jarlebring\thanks{Dept. of Mathematics, 
NA group, 
KTH Royal Institute of Technology, 
100 44 Stockholm, Sweden.}%
\and%
Simen Kvaal\thanks{Centre for Theoretical and Computational Chemistry, Department of Chemistry, University of Oslo, P.O. Box 1033 Blindern, N-0315 Oslo, Norway}
\and
Wim Michiels\thanks{K.U. Leuven, Department of Computer Science, 3001
    Heverlee, Belgium}
%
}

\selectlanguage{english}
\maketitle
\begin{abstract}
Consider a symmetric matrix $A(v)\in\RR^{n\times n}$ depending on a vector $v\in\RR^n$ and satisfying the property $A(\alpha v)=A(v)$ for any $\alpha\in\RR\backslash\{0\}$.
We will here study the problem of 
finding $(\lambda,v)\in\RR\times \RR^n\backslash\{0\}$ such that $(\lambda,v)$ is an eigenpair of the matrix $A(v)$
and we propose a generalization of inverse iteration 
for eigenvalue problems with this type of eigenvector nonlinearity.
The convergence of the proposed method is studied and 
several convergence properties are shown to be
analogous to inverse iteration for standard eigenvalue problems,
including
local convergence properties.
The algorithm
is also shown to be equivalent to a particular 
discretization of an associated ordinary differential equation,
if the shift is chosen in a particular way.
The algorithm is adapted to 
a variant of the Schr\"odinger equation 
known as the Gross-Pitaevskii equation.
We use numerical simulations to 
 illustrate the convergence properties,
as well as the efficiency of the algorithm
and the adaption. 


\end{abstract}
\section{Introduction}\label{sect:intro}
Let $A:\RR^n\rightarrow\RR^{n\times n}$ be a symmetric matrix
depending on a vector.
We
will consider 
the corresponding  nonlinear eigenvalue problem 
where the vector-valued parameter of $A$, denoted $v\in\RR^n$, equals an eigenvector of the symmetric matrix $A(v)$. That is, 
we wish to find $(\lambda,v)\in\RR\times\RR^n\backslash\{0\}$ such that 
\begin{equation}
 A(v)v=\lambda v,\label{eq:nepv}
\end{equation}
and we will call $\lambda$ an eigenvalue, $v$ an eigenvector 
and $(\lambda,v)$ an eigenpair of \eqref{eq:nepv}. 
In this work we will 
assume that $A$ is three times continuously differentiable 
with respect to $v$ for any $v\neq 0$.

Moreover, we shall  assume that $A$ satisfies 
\begin{equation}
A(\alpha v)=A(v),\;\;\textrm{ for any }\alpha\in\RR\backslash\{0\},\label{eq:invariance}
\end{equation}
such that  the solution is independent of the scaling of  $v$, i.e., 
if $(\lambda,v)$ is a solution
to \eqref{eq:nepv}, then $(\lambda,\alpha v)$ is also a solution for any $\alpha\in\RR\backslash\{0\}$. 
If a problem does not have property \eqref{eq:invariance},
but instead satisfies a normalization constraint,
we can often  transform it to an equivalent problem with property \eqref{eq:invariance}. 
For instance, consider  $\tilde{A}(\cdot)$ 
which is symmetric with 
respect to $v$, i.e.,  $\tilde{A}(v)=\tilde{A}(-v)$.
Also suppose  $\tilde{A}$ does not satisfy \eqref{eq:invariance}.
The problem to find a \emph{normalized} vector
$x\in\RR^n$, i.e., $\|x\|=1$, such that 
  $\tilde{A}(x)x=\lambda x$ 
is equivalent to  \eqref{eq:nepv} 
if we define  
$A(v):=\tilde{A}(v/\|v\|)$ such that $A$  does satisfy \eqref{eq:invariance}.

In this paper we propose a new 
algorithm for \eqref{eq:nepv}, which is a  generalization
of inverse iteration for standard linear eigenvalue problems. 
More precisely, we consider the iteration, 
\begin{equation}
  v_{k+1}=\alpha_k (J(v_k)-\sigma I)^{-1}v_k,\label{eq:invit00}
\end{equation}
where $\alpha_k=1/\|(J(v_k)-\sigma I)^{-1}v_k\|_2$  and 
$J$ is the Jacobian of the left-hand side of \eqref{eq:nepv} with respect to $v$, i.e., 
\begin{equation}
J(v):=\frac{\partial}{\partial v} (A(v)v)\in\RR^{n\times n}.
\label{eq:Jdef}
\end{equation}
The scalar $\sigma\in\RR$ is called the shift and can be 
used to control
to which eigenvalue
the iteration converges,
and has great influence on the speed of convergence.

%

The iteration \eqref{eq:invit00}  is a generalization of inverse
iteration, which is one of the most used 
algorithms for eigenvalue computations. 
Convergence analysis can be found in
\cite{Peters:1979:INVERSE,Ipsen:1997:INVERSEITER} and 
in the studies of Rayleigh quotient iteration in the classical
series of works of Ostrowski, e.g.,  \cite{Ostrowski:1959:RAYLEIGH1}.
 There are also inexact versions
of inverse iteration and its variant Rayleigh quotient iteration, 
e.g., \cite{Simoncini:2002:INEXACT,Freitag:2007:CONVERGENCE}.
These results are often used in combination with
preconditioning techniques for inverse iteration \cite{Neymeyr:2006:GEOMETRIC,Neymeyr:2005:NOTE,Knyazev:2009:GRADIENTFLOW}.
Inverse iteration 
has also been generalized to eigenvalue
problems with eigenvalue nonlinearities, e.g., 
\cite{Neumaier:1985:RESINV,Mehrmann:2004:NLEVP,Voss:2013:NEPCHAPTER} 
for which convergence has been studied  in \cite{Jarlebring:2012:CONVFACT,Jarlebring:2011:RESINVCONV}. 
There is  an 
algorithm for problems with 
eigenvector nonlinearities and
based on solving linear systems in \cite{Meyer:1997:NONLINEAR}.
See also 
the results on eigenvector nonlinearities in 
\cite{Demoulin:1975:ITERATION}.
To our knowledge, the 
iteration \eqref{eq:invit00} for problems
of the type \eqref{eq:nepv} has not been presented
or analyzed 
in  the literature.  

\begin{table}[t]
\begin{center}
\begin{tabular}{l|c|c|l}
           &  Shift $\sigma$   & Error $\|v_k-v_*\|$ & Characterization\\ \hline
Section~\ref{sect:localconv} &  arbitrary   & $\|v_{k}-v_*\|\ll1$ & One step of  \eqref{eq:invit00} satisfies the  \\ &&&first-order expansion 
\eqref{eq:intro_localerr}\\
\hline
Section~\ref{sect:flow} &  $\sigma\ll \lambda_*$ & arbitrary & One step of \eqref{eq:invit00} approximates the \\&&&
 trajectory of the ODE \eqref{eq:intro_flow}\\
\end{tabular}
\caption{Applicability of the convergence-characterizations in this
paper\label{tbl:applicability}}
\end{center}
\end{table}

We characterize the convergence of the
iteration \eqref{eq:invit00} in two ways,
which lead to conclusions for the behavior 
in two situations; when the
error $\|v_k-v_*\|$ is small or
when the shift satisfies
 $\sigma\ll\lambda_*$. See Table~\ref{tbl:applicability}. 

In particular, in the local convergence analysis (in Section~\ref{sect:localconv})
we show the following. 
 For any eigenpair $(\lambda_*,v_*)$ of \eqref{eq:nepv}, the iteration satisfies 
\begin{equation}
v_{k+1}\pm v_*=|\lambda_*-\sigma|F_*(v_k-v_*)+O(\|v_k-v_*\|^2),\label{eq:intro_localerr},
\end{equation}
where the sign depends on $\sign(\lambda_*-\sigma)$,
and we derive an expression for $F_*\in\RR^{n\times n}$.
If the shift is sufficiently close
to the eigenvalue, the iteration is locally convergent.
 The convergence is in general linear and the convergence
 factor is proportional to the
distance between the shift and the eigenvalue.

We also provide a characterization
of \eqref{eq:invit00} by deriving a relation with an ODE
applicable in the situation
when $\sigma\ll\lambda_*$. 
In particular, in Section~\ref{sect:flow} we show 
 the following. One step of \eqref{eq:invit00}
is equivalent to a particular type of discretization of the ODE 
\begin{equation}
y'(t)=p(y(t))y(t)-A(y(t))y(t),\label{eq:intro_flow}
\end{equation}
where $p$ is the Rayleigh quotient 
\begin{equation}
p(y):=\frac{y^TA(y)y}{y^Ty},\label{eq:rq}
\end{equation}
if the shift is chosen such that  $\sigma\ll\lambda_*$.
The stationary solutions of \eqref{eq:intro_flow} are solutions to \eqref{eq:nepv}.
 A small step-length in the discretization corresponds to
$\sigma\ll \lambda_*$, implying that one
step of \eqref{eq:invit00} approximates 
the trajectory of \eqref{eq:intro_flow} if $\sigma$ is chosen
sufficiently negative. 
In the linear case (when $A(v)=B$) when the matrix $B$ has
one simple dominant eigenvalue (eigenvalue closest to $-\infty$), 
the ODE only has one 
stable stationary point (which corresponds to the dominant eigenvalue)
 and it is convergent for any starting
value. If a similar situation occurs for the nonlinear case, i.e.,
 the ODE \eqref{eq:intro_flow}
is convergent and only has one
stable stationary point, then  
the iteration \eqref{eq:invit00} is
expected to converge to this solution for sufficiently negative $\sigma$
and the convergence is expected to be independent of starting vector.
In particular, if the problem is close to linear, the iteration 
is expected to  converge to  the dominant eigenvalue.


%
%
The idea we use in 
Section~\ref{sect:flow}, basing the  
reasoning on interpreting the
iterative method as a discretization or
realization of an ODE, has been used in a
number of other settings for
eigenvalue problems before, e.g.,
for the QR-method
\cite{Watkins:1984:ISOSPECTRAL,Watkins:1988:SELFSIMILAR,Chu:1988:CONTINUOUS},
preconditioning techniques
\cite{Knyazev:2009:GRADIENTFLOW} 
and characterizations of the Rayleigh
quotient iteration \cite{Mahony:2003:RAYLEIGHFLOW}.
In fact, the ODE \eqref{eq:flow0}
is a nonlinear variant of the Oja flow \cite{Yan:1994:OJA}.
See also the collection \cite{Bloch:1994:BLOCH} and
the description of
iterations on manifolds  in 
\cite{Edelman:1998:GEOMETRY}.

The  algorithm is illustrated by 
showing how it can be adapted to the Gross-Pitaevskii equation (GPE)
in Section~\ref{sect:GP}. 
The GPE is a standard model
for particles in the state of matter called the
Bose-Einstein condensate. See \cite{Landau:1965:QM} and 
references in \cite{Bao:2004:BOSEEINSTEIN} for literature
on the GPE.
We discretize the GPE and transform it to  the form \eqref{eq:nepv}.
We also show how the solution to 
the linear system $(J(v_k)-\sigma I)^{-1}v_k$ can be found 
efficiently using the Sherman-Morrison-Woodbury formula. 
It turns out that in this setting, the ODE \eqref{eq:intro_flow}
 is  
directly 
related to the  technique called 
 \emph{imaginary time-integration} \cite{Bao:2004:BOSEEINSTEIN}.
This connection allows us to use results
known for imaginary time-integration, in particular that the ODE
 always converges to  a stationary solution and the
experience presented in results in the literature
indicate that this is often a physically relevant
 solution, e.g., the ground state.
We further study the ODE and
derive a heuristic
choice of the step-length $h$, or equivalently a
heuristic choice for the shift $\sigma$, which follows
the trajectory of the ODE to sufficient accuracy,
and still maintains a fast asymptotic convergence rate.

Although we are not aware of algorithms
for \eqref{eq:nepv}, a number of
successful  algorithms
do exist for the specific
application we have in mind, i.e., the GPE in Section~\ref{sect:GP}. 
Besides the aforementioned methods based on 
imaginary time-integration, 
there are methods for the GPE based on 
minimization  \cite{Bao:2003:GROUND,Caliari:2009:MINIMISATION}
as well as a
Newton-Rhapson approach \cite{Choi:2002:NLEVPV}.
 Our approach is different
in character since the reasoning
stems from an 
eigenvalue algorithm (inverse iteration), 
and it has a different generality
setting. It can be interpreted
as a discretization of the ODE, but
with an integration scheme and step-length 
which 
we believe has not been used for the
imaginary time-integration of the GPE. 
For completeness we also present results for
another generalization of the algorithm
which turns out to be essentially 
 equivalent  to 
an approach presented in \cite{Bao:2004:BOSEEINSTEIN}
  and
can be seen as \eqref{eq:invit00} 
where $J$ is replaced by $A$.

The notation in this paper is mostly standard. 
We use $\frac{\partial}{\partial v}q(v)$ 
to denote the Jacobian of a vector or scalar $q$,
i.e., if $v\in\RR^n$ and $q(v)\in\RR^k$,  then 
$\frac{\partial}{\partial v}q(v)= 
(\frac{\partial}{\partial v_1}q(v),\ldots,\frac{\partial}{\partial v_n}q(v))\in\RR^{k\times n}$.
 We use the notation $(\cdot)_{x=y}$ to denote substitution of $x$ with $y$ 
for the formula inside the parenthesis. 
As usual, the expression $\|Z\|_2$ denotes
the Euclidean norm if $Z$ is a vector and the spectral norm
 if $Z$ is a matrix.  
The set of  eigenvalues of a matrix $B\in\RR^{n\times n}$
will be denoted $\lambda(B)$ and a dominant
eigenvalue will be used to refer to an 
eigenvalue $\mu\in\lambda(B)$ for
which all other eigenvalues have larger (or equal) real part, i.e.,
if $\mu\in\lambda(B)$ is a dominant
eigenvalue then, any $\mu_2\in\lambda(B)$ satisfies $\re(\mu)\le \re(\mu_2)$. 

\section{Preliminaries and fixed point formulation}
Throughout this paper
we will in several situations use the following
consequences of
the scaling invariance property \eqref{eq:invariance}.
\begin{lemma}[Scaling invariance]\label{thm:scalinv}
Consider a scaling invariant function $Q:\RR^n\rightarrow\RR^{k\times
  \ell}$, i.e., $Q(\alpha v)=Q(v)$ for any
$\alpha\in\RR\backslash\{0\}$. Then, for any vectors  $u\in \RR^{\ell}$, $v\in\RR^n$,
\begin{equation}
 \left(\frac{\partial}{\partial v} (Q(v)u)\right)v=0.\label{eq:invariance1}
\end{equation}
In particular, given $A:\RR^n\rightarrow\RR^{n\times n}$
satisfying the scaling invariance \eqref{eq:invariance} 
and $J$ defined by \eqref{eq:Jdef}, we have
\begin{equation}
  J(u)u=A(u)u\label{eq:JAidentity}
\end{equation}
for any $u\in\RR^n$. Moreover, if $u=v_*$, when $(\lambda_*,v_*)$
is an eigenpair of \eqref{eq:nepv}, then $\lambda_*$ is
an eigenvalue of $J(v_*)$ with eigenvector $v_*$.
\end{lemma}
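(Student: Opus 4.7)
The proof splits cleanly into the three claims, and the heart of everything is the first identity \eqref{eq:invariance1}; the remaining statements are corollaries.

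\textbf{Plan for \eqref{eq:invariance1}.} The idea is to differentiate the scaling invariance $Q(\alpha v)=Q(v)$ with respect to the scalar $\alpha$. Fix $v\in\RR^n$ and $u\in\RR^\ell$, and consider the map $\alpha\mapsto Q(\alpha v)u$. By hypothesis this map equals the constant $Q(v)u$ on $\RR\backslash\{0\}$, so its derivative at $\alpha=1$ vanishes. On the other hand, the chain rule evaluates this same derivative as $\left(\frac{\partial}{\partial v}(Q(v)u)\right)v$, and equating the two gives \eqref{eq:invariance1}. The regularity needed (once continuous differentiability of $Q$) is already built into the standing smoothness assumption on $A$, and the identity clearly passes to any scaling-invariant $Q$.

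\textbf{Plan for \eqref{eq:JAidentity}.} I would apply the product rule to $J(v)=\frac{\partial}{\partial v}(A(v)v)$ and split it as $J(v)=B(v)+A(v)$, where $B(v)$ is the partial Jacobian obtained by differentiating only the matrix factor (i.e.\ $B(v)$ is $\frac{\partial}{\partial v}(A(v)u)$ with $u$ held fixed, evaluated afterwards at $u=v$). Multiplying on the right by $v$ gives $J(v)v=B(v)v+A(v)v$. The identity from the previous step, applied with $Q=A$ and the constant vector taken equal to $v$ \emph{after} differentiation, shows $B(v)v=0$, leaving exactly $J(v)v=A(v)v$.

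\textbf{Plan for the eigenvalue statement.} This is now immediate: if $(\lambda_*,v_*)$ is an eigenpair of \eqref{eq:nepv}, so that $A(v_*)v_*=\lambda_* v_*$, then substituting into \eqref{eq:JAidentity} with $u=v_*$ yields $J(v_*)v_*=\lambda_* v_*$, which is the desired conclusion.

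\textbf{Expected obstacle.} There is no deep difficulty; the only step that requires care is the product-rule bookkeeping in \eqref{eq:JAidentity}, where one must distinguish the partial derivative with respect to the argument of $A$ from the substitution of $v$ into the trailing factor. Writing the product rule componentwise (or invoking \eqref{eq:invariance1} with the constant vector chosen after differentiation) removes any ambiguity.
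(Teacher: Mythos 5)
Your proposal is correct and follows essentially the same route as the paper: the paper also establishes \eqref{eq:invariance1} by identifying the left-hand side with the directional derivative of $Q(v)u$ in the direction $v$, i.e.\ the limit of $\bigl(Q((1+\veps)v)u-Q(v)u\bigr)/\veps$, which vanishes by scaling invariance --- the same computation as your derivative in $\alpha$ at $\alpha=1$. The paper then dispatches \eqref{eq:JAidentity} with a one-line appeal to the chain/product rule, for which your explicit decomposition $J(v)=B(v)+A(v)$ with $B(v)v=0$ is precisely the intended bookkeeping.
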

\begin{proof}
The left-hand side of \eqref{eq:invariance1} can
be interpreted as a directional
in the direction of $v$. We have 
\[
\left(\frac{\partial}{\partial v} (Q(v)u)\right)v:=
\lim_{\veps\rightarrow 0}\frac{Q(v+\veps v)u-Q(v)u}{\veps}=
\lim_{\veps\rightarrow 0}\frac{Q((1+\veps)v)u-Q(v)u}{\veps}=0,
\]
which shows \eqref{eq:invariance1}.
The identity \eqref{eq:JAidentity}
follows from the chain rule.
\end{proof}

The iteration can naturally be represented in a 
fixed point form, 
\[
v_{k+1}=\varphi(v_k)
\]
where
\begin{equation}
  \varphi(v):=\frac{1}{\|\psi(v)\|}\psi(v)\label{eq:phidef}
\end{equation}
and
\begin{equation}
 \psi(v):= (J(v)-\sigma I)^{-1}v.\label{eq:psidef}
\end{equation}
Obviously, $v_{k+1}=\varphi(v_k)$ when $v_k$
is given by \eqref{eq:invit00}. 
It also turns out that the fixed points of $\varphi$ are
solutions to \eqref{eq:nepv}. However, the
converse is not true. We shall 
now show that 
if $\sigma>\lambda_*$, we have $\varphi(v_*)=-v_*$,
implying that the iterates $v_k$ alternate between
$v_*$ and $-v_*$ if $v_0=v_*$.
An equivalence with the solutions to \eqref{eq:nepv}
and the vectors that satisfy $\varphi(v_*)=\pm v_*$,
 for some choice of the sign,
can be achieved if 
we take the alternation into account,
as can be seen as follows. 

\begin{proposition}[Equivalence of fixed points and eigenvectors] \label{thm:fixedp}
\begin{itemize}
\item[(a)] 
Suppose $(\lambda_*,v_*)$  is a solution to \eqref{eq:nepv} 
with $\|v_*\|=1$ 
and suppose $\sigma\neq\lambda_*$. Then,
the fixed point map $\varphi$ satisfies 
\begin{equation}
   \varphi(v_*)=\pm v_*.\label{eq:isafp}
\end{equation}
The sign in \eqref{eq:isafp} should be chosen positive if $\sigma<\lambda_*$,
otherwise negative.
\item[(b)] Suppose $v_*=\varphi(v_*)$   or 
$v_*=-\varphi(v_*)$. Then, 
$(\lambda_*,v_*)$ is
a solution to  \eqref{eq:nepv}
with 
\begin{equation}
\lambda_*:=\sigma\pm \frac{1}{\|(J(v_*)-\sigma I)^{-1}v_*\|}=v_*^TA(v_*)v_*.\label{eq:fp_lambda}
\end{equation}
\end{itemize}
%
\end{proposition}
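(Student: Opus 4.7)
For part (a), the plan is to apply the scaling invariance lemma (Lemma~\ref{thm:scalinv}) to convert the eigenvalue equation for $A(v_*)$ into one for $J(v_*)$. Since $(\lambda_*,v_*)$ solves \eqref{eq:nepv}, the identity \eqref{eq:JAidentity} gives $J(v_*)v_* = A(v_*)v_* = \lambda_* v_*$, so $v_*$ is an eigenvector of $J(v_*)$ with eigenvalue $\lambda_*$. Assuming $\sigma\neq\lambda_*$, the shifted matrix $J(v_*)-\sigma I$ acts on $v_*$ as multiplication by $\lambda_*-\sigma\neq 0$, hence
\[
\psi(v_*) = (J(v_*)-\sigma I)^{-1}v_* = \frac{1}{\lambda_*-\sigma}v_*.
\]
Using $\|v_*\|=1$, normalization yields $\varphi(v_*) = \frac{1}{\|\psi(v_*)\|}\psi(v_*) = \sign(\lambda_*-\sigma)\,v_*$, which gives \eqref{eq:isafp} with the sign convention claimed.

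For part (b), I would read the hypothesis $v_*=\pm\varphi(v_*)$ in reverse: since $\varphi$ always produces a unit vector, we automatically get $\|v_*\|=1$, and the equation $v_*=\pm\frac{\psi(v_*)}{\|\psi(v_*)\|}$ means that $\psi(v_*)$ is parallel to $v_*$, with proportionality constant $\pm\|\psi(v_*)\|$. Equivalently,
\[
(J(v_*)-\sigma I)^{-1}v_* = \pm\|(J(v_*)-\sigma I)^{-1}v_*\|\,v_*,
\]
so $v_*$ is an eigenvector of $(J(v_*)-\sigma I)^{-1}$, and therefore of $J(v_*)$, with eigenvalue
\[
\mu := \sigma \pm \frac{1}{\|(J(v_*)-\sigma I)^{-1}v_*\|}.
\]
Setting $\lambda_*:=\mu$ and applying \eqref{eq:JAidentity} once more to transfer the eigenvector property from $J(v_*)$ to $A(v_*)$ gives $A(v_*)v_* = \lambda_* v_*$, so $(\lambda_*,v_*)$ solves \eqref{eq:nepv}. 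The second equality in \eqref{eq:fp_lambda} then follows by left-multiplying by $v_*^T$ and using $\|v_*\|=1$.

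The only delicate point is bookkeeping the sign: in (a) one has to verify that $\sign(\lambda_*-\sigma)$ matches the stated convention (positive for $\sigma<\lambda_*$, negative otherwise), and in (b) one must observe that the $\pm$ in the hypothesis propagates consistently into the $\pm$ in \eqref{eq:fp_lambda}. Neither is a real obstacle; it is simply a matter of tracking the sign of $\lambda_*-\sigma$ when pulling the absolute value through the normalization step. Everything else reduces to the identity $J(v_*)v_* = A(v_*)v_*$ supplied by Lemma~\ref{thm:scalinv}.
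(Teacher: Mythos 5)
Your proposal is correct and follows essentially the same route as the paper: for (a) you use the identity $J(v_*)v_*=A(v_*)v_*$ from Lemma~\ref{thm:scalinv} to get $\psi(v_*)=\frac{1}{\lambda_*-\sigma}v_*$ and then normalize, and for (b) you read the fixed-point relation backwards to identify $v_*$ as an eigenvector of $J(v_*)$ and transfer back to $A(v_*)$ via the same lemma, exactly as the paper does. The only (harmless) cosmetic difference is that you make explicit the observation that $\|v_*\|=1$ is automatic in (b) because $\varphi$ outputs unit vectors, which the paper leaves implicit.
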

\begin{proof}
In order to show (a), suppose 
$(\lambda_*,v_*)$ is an eigenpair and note that 
from Lemma~\ref{thm:scalinv} we have that 
\[
  \lambda_* v_*=A(v_*)v_*=J(v_*)v_*.
\]
By subtracting $\sigma v_*$ from both sides and subsequently
multiplying both sides 
from the left by $\frac{1}{\lambda_*-\sigma}(J(v_*)-\sigma I)^{-1}$ 
we can simplify
\[
  \psi(v_*)= 
(J(v_*)-\sigma I )^{-1}v_*=
\frac{1}{\lambda_*-\sigma}v_*,
\]
and we consequently  find that 
\begin{equation}
  \frac{1}{\|\psi(v_*)\|}=\frac{1}{\sqrt{\psi(v_*)^T\psi(v_*)}}=|\lambda_*-\sigma|.
\label{eq:sqrtlambda}
\end{equation}
Hence, $\varphi$  evaluated at $v=v_*$ is explicitly 
\[
  \varphi(v_*)=
  \frac{1}{\sqrt{\psi(v_*)^T\psi(v_*)}}\psi(v_*)=
\frac{|\lambda_*-\sigma|}{\lambda_*-\sigma} v_*=\pm v_*.
\]
In order to show (b), suppose that $v_*$ is
such that  $\varphi(v_*)=\pm v_*$. 
Equation \eqref{eq:phidef} leads to 
\[
   (J(v_*)-\sigma I)v_*=\frac{\pm1}{\|(J(v_*)-\sigma I)^{-1}\|}v_*.
\]
From Lemma~\ref{thm:scalinv} it follows that  $J(v_*)v_*=A(v_*)v_*$ and 
we finally have that 
\[
  A(v_*)v_*= \left(\sigma\pm \frac{1}{\|(J(v_*)-\sigma I)^{-1}v_*\|}\right)v_*.
\]  
and the formula \eqref{eq:fp_lambda} follows
by multiplying from the left with $v_*^T$. 
\end{proof}

\section{Local convergence properties}~\label{sect:localconv}
\subsection{First-order behavior and convergence factor}
From  Proposition~\ref{thm:fixedp}
we can directly conclude that if 
we at some point in 
the iteration have $v_k=v_*$, where $v_*$
is an eigenvector, then
every subsequent iterate $v_j$, $j>k$,  will 
also be an eigenvector
corresponding to the same eigenvalue, but
can 
possibly alternate between  $\pm v_*$. 
We will now study the case
where $v_k$ is close but not equal to an eigenvector. 
In order to understand
this local convergence behavior, we also need to take  
the alternation into account. 
The error is to first order given by the following result.

\begin{theorem}[Local convergence]\label{thm:localconv}
Suppose $(\lambda_*,v_*)$  is a solution to \eqref{eq:nepv}
with $\|v_*\|=1$. Let $\sigma\in\RR$ be any
shift such that $J(v_*)-\sigma I$ is non-singular, 
in particular $\sigma\neq\lambda_*$.
 Then,
\begin{equation}
  \varphi'(v_*)=|\lambda_*-\sigma|(I-v_*v_*^T)%
\left(J(v_*)-\sigma I\right)^{-1}.\label{eq:varphiprim}
\end{equation}
Moreover, 
suppose the iterates $v_k$ generated by  
\eqref{eq:invit00} are such that
$J(v_k)-\sigma I$ is non-singular for any $k$. Then,
the iterates 
satisfy
\begin{equation}
   v_{k+1}\mp v_*=\varphi'(v_*)(v_k-v_*)+O(\|v_k-v_*\|^2)\label{eq:vkp1vk}
\end{equation}
The sign in \eqref{eq:vkp1vk} should be chosen negative if $\sigma<\lambda_*$,
otherwise positive.
%
\end{theorem}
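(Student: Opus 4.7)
The plan is to compute $\varphi'(v_*)$ explicitly and then obtain \eqref{eq:vkp1vk} from a first-order Taylor expansion of $\varphi$ about $v_*$, using Proposition~\ref{thm:fixedp}(a) to pin down $\varphi(v_*)$ and its sign. The three-times continuous differentiability of $A$ together with the assumed non-singularity of $J(v_*)-\sigma I$ guarantees that $\varphi$ is $C^2$ in a neighborhood of $v_*$, so once the derivative is known the Taylor step will be routine.

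First I would differentiate $\varphi = \psi/\|\psi\|$ by the quotient rule. This yields a general formula of the form
\[
\varphi'(v) = \frac{1}{\|\psi(v)\|}\left(I - \frac{\psi(v)\psi(v)^T}{\|\psi(v)\|^2}\right)\psi'(v).
\]
From the proof of Proposition~\ref{thm:fixedp} we already have $\psi(v_*)=(\lambda_*-\sigma)^{-1}v_*$ and $\|\psi(v_*)\|=|\lambda_*-\sigma|^{-1}$, so when this formula is evaluated at $v_*$, the scalar factor becomes $|\lambda_*-\sigma|$ and the rank-one projector collapses to $I-v_*v_*^T$ (using $\|v_*\|=1$). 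What remains is to evaluate $\psi'(v_*)$.

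Computing $\psi'(v_*)$ is the crux and the place I expect the main obstacle. Differentiating $\psi(v)=(J(v)-\sigma I)^{-1}v$ in a direction $u$ and using the identity for the derivative of a matrix inverse produces two contributions, one of which involves $J'(v_*)[u]\,v_*$. The clean formula \eqref{eq:varphiprim} holds only because this cross-term vanishes, and showing this is the heart of the argument. I would derive it directly from \eqref{eq:JAidentity}: differentiating $J(v)v=A(v)v$ in a direction $u$ gives $J'(v)[u]\,v+J(v)u=J(v)u$, forcing $J'(v)[u]\,v=0$ for every $u$ and $v$. Equivalently, one can observe that $g(v):=A(v)v$ is homogeneous of degree one in $v$, hence $J(\alpha v)=J(v)$, and apply Lemma~\ref{thm:scalinv} to $Q=J$. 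Either route collapses $\psi'(v_*)$ to $(J(v_*)-\sigma I)^{-1}$ and delivers \eqref{eq:varphiprim}.

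With $\varphi'(v_*)$ in hand, the error expansion follows from Taylor's theorem applied to $\varphi$ at $v_*$: I would write $v_{k+1}=\varphi(v_k)=\varphi(v_*)+\varphi'(v_*)(v_k-v_*)+O(\|v_k-v_*\|^2)$, substitute $\varphi(v_*)=\pm v_*$ from Proposition~\ref{thm:fixedp}(a), and move the constant term to the left. The sign convention in \eqref{eq:vkp1vk} is forced by the convention in Proposition~\ref{thm:fixedp}(a): it is negative when $\sigma<\lambda_*$ (because then $\varphi(v_*)=+v_*$) and positive otherwise. The hypothesis that $J(v_k)-\sigma I$ is non-singular for every $k$ is exactly what is needed for $\varphi$, and hence the Taylor remainder, to be well-defined along the iteration.
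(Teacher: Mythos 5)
Your proposal is correct and follows essentially the same route as the paper: the same projector formula $\varphi'(v)=\frac{1}{\|\psi(v)\|}\bigl(I-\varphi(v)\varphi(v)^T\bigr)\psi'(v)$, the same evaluation at $v_*$ via Proposition~\ref{thm:fixedp}, and the same reduction of the whole problem to showing that the cross-term $\left(\frac{\partial}{\partial v}J(v)v_*\right)_{v=v_*}$ vanishes. Your primary derivation of that fact --- differentiating the identity $J(v)v=A(v)v$ of \eqref{eq:JAidentity} in an arbitrary direction $u$ to obtain $J'(v)[u]\,v+J(v)u=J(v)u$, hence $J'(v)[u]\,v=0$ --- is a cleaner, coordinate-free version of the paper's column-by-column limit computation leading to \eqref{eq:Jis0}; both arguments rest on exactly the same identity, so this is a difference of presentation rather than of substance. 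One caveat: your second, allegedly ``equivalent'' route is not sufficient as stated. Applying Lemma~\ref{thm:scalinv} with $Q=J$ yields only $\left(\frac{\partial}{\partial v}(J(v)u)\right)v=0$, i.e., that the derivative matrix annihilates the single direction $v$; it does not show that the full matrix $\left(\frac{\partial}{\partial v}J(v)v_*\right)_{v=v_*}$ is zero, which is what is needed for \eqref{eq:varphiprim}. Rely on the first route.
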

\begin{proof}
By using the Taylor expansion and 
the fixed point characterization in Proposition~\ref{thm:fixedp},
we have that the left-hand side of \eqref{eq:vkp1vk} 
satisfies
\[
  v_{k+1}\mp v_*=
  v_{k+1}-(\pm v_*)=\varphi(v_k)-\varphi(v_*)=
\varphi'(v_*)(v_k-v_*)+O(\|v_k-v_*\|^2).
\]
It remains to derive the formula 
$\varphi'(v_*)$ given by \eqref{eq:varphiprim}.

We will need the derivative of the two-norm, which can be 
expressed as the row-vector
\begin{equation}
\frac{\partial}{\partial v}\|\psi(v)\|_2= \frac{\partial}{\partial v}\sqrt{\psi(v)^T\psi(v)}=
\frac{1}{\sqrt{\psi(v)^T\psi(v)}}\psi(v)^T\psi'(v)=
\varphi(v)^T\psi'(v),\label{eq:sqrtder}
\end{equation}
since $\varphi(v)^T=\psi(v)^T/\|\psi(v)\|_2$.

From the definition \eqref{eq:phidef} of $\varphi$ 
we have the relation  $\varphi(v)\|\psi(v)\|=\psi(v)$,
whose Jacobian can be computed with the product rule and \eqref{eq:sqrtder}, 
\[
  \varphi'(v)\|\psi(v)\|+ \varphi(v)
\varphi(v)^T
\psi'(v)
=\psi'(v).
\]
Hence, 
\begin{equation}
\varphi'(v)=\frac{1}{\|\psi(v)\|}
\left(I-\varphi(v)\varphi(v)^T \right)\psi'(v).\label{eq:localconv_proof_varphip}
\end{equation}
Now recall 
(from Proposition~\ref{thm:fixedp})  that  $\varphi(v_*)=\pm v_*$  and consequently 
$\varphi(v_*)\varphi(v_*)^T=v_*v_*^T$. This
fact combined with the formula for the norm
\eqref{eq:sqrtlambda} leads to a
simplification of \eqref{eq:localconv_proof_varphip} when 
we evaluate at  $v=v_*$, 
\begin{equation}
\varphi'(v_*)=|\lambda_*-\sigma|
\left(I-v_*v_*^T \right)\psi'(v_*).\label{eq:phivs}
\end{equation}
It remains to establish a formula for the Jacobian of $\psi$ 
evaluated at $v=v_*$. 
By differentiation of \eqref{eq:psidef} multiplied by $J(v)-\sigma I$, we have
%
\begin{equation}
\left(\frac{\partial}{\partial v}
 (J(v)-\sigma I)\psi(\hat{v})\right)_{\hat{v}=v}+
 (J(v)-\sigma I)\psi'(v)
= I\label{eq:Jpsider}
\end{equation}
and
\begin{equation}
\psi'(v_*)=
-(J(v_*)-\sigma I)^{-1}
\left(\frac{\partial}{\partial v}
 J(v)\psi(v_*)\right)_{v=v_*}+
 (J(v_*)-\sigma I)^{-1}.\label{eq:psider}
\end{equation}
Moreover, note that $\psi(v_*)=\|\psi(v_*)\|\varphi(v_*)=\pm \|(J(v_*)-\sigma I)^{-1}v_*\|v_*$.
We will now show that the first term in \eqref{eq:psider} vanishes
identically by showing that
all columns of $\left(\frac{\partial}{\partial v}J(v)v_*\right)_{v=v_*}$ vanish.
Let the $j$th column be denoted 
\begin{equation}
c_j:=
\left(\frac{\partial}{\partial v}J(v)v_*\right)_{v=v_*}e_j=
\lim_{\veps\rightarrow 0}\frac{1}{\veps}
\left(J(v_*+\veps e_j)v_*-J(v_*)v_*\right).\label{eq:rkdef}
\end{equation}
Lemma~\ref{thm:scalinv} and in particular the
 identity  $J(u)u=A(u)u$
for any $u\in\RR^n$, implies that
\begin{multline*}
 J(v_*+\veps e_j)v_*-J(v_*)v_*=\\
J(v_*+\veps e_j)(v_*+\veps e_j)-\veps J(v_*+\veps e_j)e_j-J(v_*)v_*=\\
A(v_*+\veps e_j)(v_*+\veps e_j)-\veps J(v_*+\veps e_j)e_j-J(v_*)v_*=\\
A(v_*)v_*+\veps J(v_*)e_j
-\veps J(v_*+\veps e_j)e_j-J(v_*)v_* +O(\veps^2),
\end{multline*}
where we used a Taylor expansion of $A(v_*+\veps e_j)(v_*+\veps e_j)
=A(v_*)v_*+J(v_*)(\veps e_j)+O(\veps^2)$ in
the last step. 
Hence, by again applying 
 Lemma~\ref{thm:scalinv} now giving
 $A(v_*)v_*=J(v_*)v_*$, 
we have
\[
c_j= \lim_{\veps\rightarrow 0}
\big(J(v_*)e_j-
J(v_*+\veps e_j)e_j+O(\veps)\big)=0.
\]
We have shown that $c_j=0$, for any $j=1,\ldots,n$
which implies that 
\begin{equation}
\left(\frac{\partial}{\partial v}J(v)v_*\right)_{v=v_*}=0\in\RR^{n\times n}
\label{eq:Jis0}
\end{equation}
such that the first term in \eqref{eq:psider} vanishes and
the proof is complete. 
\end{proof}

%
The above theorem for the local behavior 
directly gives a characterization of
the convergence factor in terms of the largest
eigenvalue of $\varphi'(v_*)$ in modulus. 
To formalize this statement we will use the concept
of $Q$-convergence factor and $R$-convergence factor
given in \cite[Chapter~9]{Ortega:2000:NEWTON}.
We briefly summarize the concepts for the generic situation
and  our setting. 
Suppose a sequence $\{v_k\}_{k=0}^{\infty}$ 
converges linearly to $v_*$. Then, 
generically, the $R$-convergence
factor (for the sequence  $\{v_k\}_{k=0}^{\infty}$) is given by 
\[
  R_1(\{v_k\})=\lim_{k\rightarrow\infty}\|v_k-v_*\|^{1/k}.
\]
Correspondingly, the $Q$-convergence factor is given by
\[
 Q_1(\{v_k\})=\lim_{k\rightarrow\infty}\frac{\|v_{k+1}-v_*\|}{ \|v_k-v_*\|}.
\]
The $R$-convergence factor and $Q$-convergence factor
associated with a fixed point iteration and 
a particular fixed point is defined as the 
supremum of the convergence
factor for all non-degenerate convergent 
sequences generated by the fixed point iteration,
and will be denoted 
$R_1(\varphi,v_*)$ and $Q_1(\varphi,v_*)$.
See \cite[Chapter~9]{Ortega:2000:NEWTON} for 
formal definitions.
%
%

When $\sigma<\lambda_*$ we can conclude from Proposition~\ref{thm:fixedp}
that $v_*$ is a fixed point of $\varphi(v_*)$. 
On the other
hand, if $\sigma>\lambda_*$ we need
to take alternation into account. It 
is easy to verify that $v_*$
is fixed point of $\hat{\varphi}(v):=-\varphi(v)$ 
which is a fixed point iteration
giving the same sequence of vectors $v_k$ generated by $\varphi$, except that
every second vector has a different sign. 
This compensates for the 
asymptotic sign alternation, such that 
convergence to an oscillating sequence $v_k$
using $\varphi$ is equivalent to 
a convergence to a fixed point using $\hat{\varphi}$
and allows us to 
study both cases in a unified manner.
We will now see that these definitions
of convergence factors give one
formula for the convergence factor (independent of 
the two cases $\lambda_*<\sigma$ or $\lambda_*>\sigma$) 
involving eigenvalues of $J(v_*)$. Moreover,
the $R$-convergence factors and $Q$-convergence
factors are in the generic situation equal. 
%
%
%
%
%
\begin{corollary}[Convergence factor]\label{thm:convfact}
Let  $(\lambda_*,v_*)$ be an eigenpair of \eqref{eq:nepv}
with $\|v_*\|=1$.
Suppose $\lambda_*$  is a simple eigenvalue of $J(v_*)$ and 
let
\begin{equation}
\gamma=\frac{|\lambda_*-\sigma|}{|\mu_2-\sigma|}\label{eq:convfact}
\end{equation}
where $\mu_2\in\CC$ is the eigenvalue of $J(v_*)$ 
closest to $\sigma$, but not equal to $\lambda_*$, i.e., 
\[
  \mu_2=\underset{{\mu\in\lambda(J(v_*))\backslash\{\lambda_*\}}}{\operatorname{argmin}} |\mu-\sigma|
\]
and suppose it is simple.
Let $\hat{\varphi}$ be defined by 
\begin{itemize}
\item $\hat\varphi:=\varphi$ when $\sigma<\lambda_*$; and
\item $\hat\varphi:=-\varphi$ when $\sigma>\lambda_*$. 
\end{itemize}
Suppose $\gamma<1$. 
Then  the fixed point $v_*$ is locally convergent with 
respect to $\hat\varphi$ 
and the 
convergence is linear (in both $R$ 
order and $Q$ order). Moreover, the
$R$-convergence factor is given by 
\begin{equation}
    R_1(\hat\varphi,v_*)=\gamma.
\end{equation}
Furthermore, assume that  $\{v_k\}_{k=0}^\infty$ 
is a sequence generated by $\hat{\varphi}$
convergent to $v_*$ and such that $R_1(\{v_k\})=\gamma$.
Then, under the condition that  
$\|v_{k+1}-v_*\|/\|v_k-v_*\|$ converges to a nonzero value, 
the $Q$-convergence factor is 
equal to the $R$-convergence factor, i.e., 
\begin{equation}
  Q_1(\{v_k\})=\gamma. \label{eq:Q_factor}
\end{equation}
\end{corollary}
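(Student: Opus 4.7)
The plan is to reduce the corollary to a spectral computation on the Jacobian $\varphi'(v_*)$ furnished by Theorem~\ref{thm:localconv}, namely $\varphi'(v_*)=|\lambda_*-\sigma|(I-v_*v_*^T)(J(v_*)-\sigma I)^{-1}$, and then to appeal to standard convergence-factor theorems from \cite[Chapter~10]{Ortega:2000:NEWTON}. Since $\hat\varphi=\pm\varphi$, we have $\hat\varphi'(v_*)=\pm\varphi'(v_*)$, so the spectrum (and therefore the spectral radius) of $\hat\varphi'(v_*)$ coincides with that of $\varphi'(v_*)$ up to sign. By Lemma~\ref{thm:scalinv}, $J(v_*)v_*=A(v_*)v_*=\lambda_*v_*$, so $v_*$ is an eigenvector of $M:=(J(v_*)-\sigma I)^{-1}$ with eigenvalue $1/(\lambda_*-\sigma)$; this is the structural fact that drives the computation.

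Next I would build an orthogonal matrix $V=[\,v_*\mid W\,]\in\RR^{n\times n}$ with $W^Tv_*=0$ and $W^TW=I_{n-1}$. A direct computation using $Mv_*=v_*/(\lambda_*-\sigma)$ yields the block upper-triangular form
\begin{equation*}
V^TMV=\begin{pmatrix}1/(\lambda_*-\sigma)&v_*^TMW\\0&W^TMW\end{pmatrix},\qquad V^T(I-v_*v_*^T)V=\begin{pmatrix}0&0\\0&I_{n-1}\end{pmatrix},
\end{equation*}
so that
\begin{equation*}
V^T\varphi'(v_*)V=|\lambda_*-\sigma|\begin{pmatrix}0&0\\0&W^TMW\end{pmatrix}.
\end{equation*}
The eigenvalues of the block triangular matrix $V^TMV$ are $1/(\lambda_*-\sigma)$ together with the eigenvalues of $W^TMW$; since they also equal the eigenvalues of $M$ and $\lambda_*$ is simple, the eigenvalues of $W^TMW$ are exactly $\{1/(\mu-\sigma):\mu\in\lambda(J(v_*))\setminus\{\lambda_*\}\}$. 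Consequently $\rho(\varphi'(v_*))=|\lambda_*-\sigma|/|\mu_2-\sigma|=\gamma$.

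For local convergence and the R-factor statement, I would apply the Ostrowski-type result (e.g., \cite[Thm.~10.1.3--10.1.4]{Ortega:2000:NEWTON}): because $v_*$ is a fixed point of $\hat\varphi$ (Proposition~\ref{thm:fixedp}) and $\rho(\hat\varphi'(v_*))=\gamma<1$, the fixed point is locally attracting and $R_1(\hat\varphi,v_*)=\gamma$, with linear convergence. For the Q-factor claim, if $a_k:=\|v_k-v_*\|$ satisfies $a_{k+1}/a_k\to\beta$ for some $\beta>0$, then by definition $Q_1(\{v_k\})=\beta$, while the elementary Stolz--Ces\`aro argument gives $a_k^{1/k}\to\beta$ as well, hence $R_1(\{v_k\})=\beta$; combining with the hypothesis $R_1(\{v_k\})=\gamma$ yields $\beta=\gamma$ and therefore $Q_1(\{v_k\})=\gamma$.

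I do not expect a serious obstacle here: the only delicate points are the orthogonal block-decomposition step (clean once one notices that $v_*$ is a right eigenvector of $M$, so $W^TMv_*=0$) and invoking the Ortega--Rheinboldt framework with the correct precise statement for the R-factor being the spectral radius of the derivative at the fixed point. The separation between R- and Q-behaviour is resolved by the extra hypothesis on the ratio $a_{k+1}/a_k$, which in the end reduces the Q-statement to the R-statement via a one-line Stolz--Ces\`aro calculation.
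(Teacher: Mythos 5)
Your proposal is correct and follows essentially the same route as the paper: both reduce the statement via Ostrowski's theorem and the linear-convergence theorem of \cite[Chapter~10]{Ortega:2000:NEWTON} to the spectral radius of $\varphi'(v_*)$ from Theorem~\ref{thm:localconv}, and then identify that spectrum by deflating the eigenvalue associated with $v_*$. Your explicit orthogonal block-triangularization and the Stolz--Ces\`aro argument simply make rigorous two steps the paper handles by verbal assertion and by citation of \cite[NR~10.1-5]{Ortega:2000:NEWTON}, and your spectral-radius formula correctly retains the factor $|\lambda_*-\sigma|$ that is dropped in the paper's intermediate display.
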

\begin{proof}
Consider
any of the two iterations $v_{k+1}=\varphi(v_k)$ and $v_{k+1}=-\varphi(v_k)$. 
Ostrowski's theorem \cite[Theorem~10.1.3]{Ortega:2000:NEWTON},
 the linear convergence theorem \cite[Theorem~10.1.4]{Ortega:2000:NEWTON} 
and Theorem~\ref{thm:localconv} gives
the characterization 
of the $R$-convergence factor using the spectral radius 
\[
 R_1(\{v_k\})=\rho((I-v_*v_*^T)(J(v_*)-\sigma I)^{-1}),
\]
independent of which of the two iterations are considered. 

Now note that $v_*$ is an eigenvector of $J(v_*)$ 
and also an eigenvector of $(J(v_*)-\sigma I)^{-1}$. 
The application of
the projector $(I-v_*v_*^T)$ has the effect that 
the eigenvalues of $(I-v_*v_*^T)(J(v_*)-\sigma I)^{-1}$
are the same as the  eigenvalues of  $(J(v_*)-\sigma I)^{-1}$
except for the eigenvalue corresponding
to $v_*$ (which by assumption is simple). 
This eigenvalue  is transformed to zero, hence
the maximum must be taken over the 
eigenvalues of $(J(v_*)-\sigma I)^{-1}$ 
except for the $(\sigma-\lambda_*)^{-1}$. 
We shown \eqref{eq:convfact} have the formula for $\gamma$. 

The formula \eqref{eq:Q_factor} follows
from the fact that the linear $Q$-convergence factor is equal to the
linear $R$-convergence factor under the assumption that the
error quotient $\|v_{k+1}-v_*\|/\|v_k-v_*\|$ converges, which holds
 by assumption.
See e.g. \cite[NR~10.1-5]{Ortega:2000:NEWTON}.

\end{proof}

\subsection{Similarities and differences with inverse iteration for linear problems}

The iteration \eqref{eq:invit00}
 is clearly a generalization of
inverse iteration when $A(v)=A_0$ is constant,
and a behavior similar to (linear) inverse
iteration is expected when the problem 
is close to linear. More importantly,
from the theory
above we can conclude that the
iteration possesses many properties similar to 
inverse iteration also in a  situation  when the problem 
is not close to linear. 

\begin{itemize}
\item The convergence is in general linear. 
\item The convergence factor is 
asymptotically proportional to eigenvalue shift distance
when the distance is small.
\item The convergence factor
 \eqref{eq:convfact}
 is in general determined by
a quotient involving  the eigenvalue shift distance in the numerator. 
Unlike the linear case, the denominator is the distance between
the shift and the second closest eigenvalue of $J(v_*)$. 
\item Unlike the linear
case, for a given shift, several eigenvectors
can be attractive fixed points. Conversely, for a given shift, the iteration may not have any 
attraction points. 
\end{itemize}
In order to further illustrate the value of these properties
we will in this work 
also briefly study another generalization 
of inverse iteration. Inverse iteration for standard
eigenvalue problems consists of shifting, inverting 
and normalizing, and it is from this perspective
natural to consider the generalization  $v_{k+1}=\varphi_A(v_k)$,  
\begin{equation}
  \label{eq:phidef-A}
  \varphi_A(v) := \frac{1}{\|\psi_A(v)\|} \psi_A(v), \quad \psi_A(v) = (A(v)-\sigma I)^{-1}v.
\end{equation}
We will call the $A$-version of inverse iteration.

We can derive a description 
of the first-order behavior 
similar to Theorem~\ref{thm:localconv}. 
Note that $\varphi_A'(v_*)$ is
considerably more complicated than $\varphi'(v_*)$ 
and most of the bullets above do not
apply to this version, most importantly,  the convergence factor is not necessarily
small if the shift is close to the eigenvalue. 

%
%
%

\subsection{Illustration  of local convergence}\label{sect:localconv_exmp}
Several properties of the algorithm, including
the local convergence above, 
can be  observed  when applied
to the example 
\[
  A(v)=A_0+\sin(\frac{v^TBv}{v^Tv}) A_1.
\]
The Jacobian is given by
\[
  J(v)=\frac{\partial}{\partial v}A(v)v=A(v)+2\frac{\cos(\frac{v^TBv}{v^Tv})}{(v^Tv)^2}A_1v((v^Tv)v^TB-(v^TBv)v^T).
\]
%
%
We selected $A_0$, $A_1$ and $B$ in a random way. In order 
to make the numerical simulations reproducible, 
we will fix $A_0$, $A_1$ and $B$ 
as follows
\begin{eqnarray*}
A_0&=&
\frac{1}{10}
\begin{pmatrix}
    10 &   21 &   13 &   16\\
    21 &  -26 &   24 &    2\\
    13 &   24 &  -26 &   37\\
    16 &    2 &   37 &   -4
\end{pmatrix},
A_1=
\frac{\beta}{10}
\begin{pmatrix}
    20  &  28 &   12 &   32\\
    28  &   4 &   14 &    6\\
    12  &  14 &   32 &   34\\
    32  &   6 &   34 &   16
\end{pmatrix},\\
B&=&
\frac{1}{10}
\begin{pmatrix}
   -14  &  16 &   -4  &  15\\
    16  &  10 &   15  &  -9\\
    -4  &  15 &   16  &   6\\
    15  &  -9 &    6  &  -6
\end{pmatrix}
\end{eqnarray*}
where we carry out
simulations for a number of different $\beta$. Note that 
the problem is linear when $\beta=0$.

Simulations of the inverse iterations
for different $\beta$ are given in 
Figure~\ref{fig:AvsJerror} and
Figure~\ref{fig:sinexmp:convfact}.
In Figure~\ref{fig:AvsJerror} 
we clearly observe linear convergence,
for the $A$-version 
as well as the $J$-version. The $J$-version 
is also clearly less dependent on $\beta$. 
As  the nonlinearity parameter $\beta$ increases,
the performance of the $A$-version 
worsens whereas the convergence
of the  $J$-version is not greatly affected by increase of $\beta$. 
We see in Figure~\ref{fig:AvsJerror}f that the slow
convergence of the $A$-version
can not always be compensated
by moving $\sigma$ closer 
to the eigenvalue $\lambda_*$.
In this case the $A$-version 
is not 
even locally convergent when $\sigma=\lambda_*$, whereas
the $J$-version exhibits quadratic convergence. Note that 
for $\sigma=\lambda_*$ the fixed points map $\varphi(v_*)$ and
$\varphi_A(v_*)$ involve inverses
of singular matrices, implying that they  
do formally  not have fixed points $v_*$.
is invertible.
 Similar to inverse iteration for standard eigenvalue 
problems the $J$-version still works in practice when subject
to rounding errors.

\begin{figure}[H]
\begin{center}
\subfigure[$\beta=0.01$, $\sigma=\lambda_*+0.3$]{\scalebox{0.6}{\includegraphics{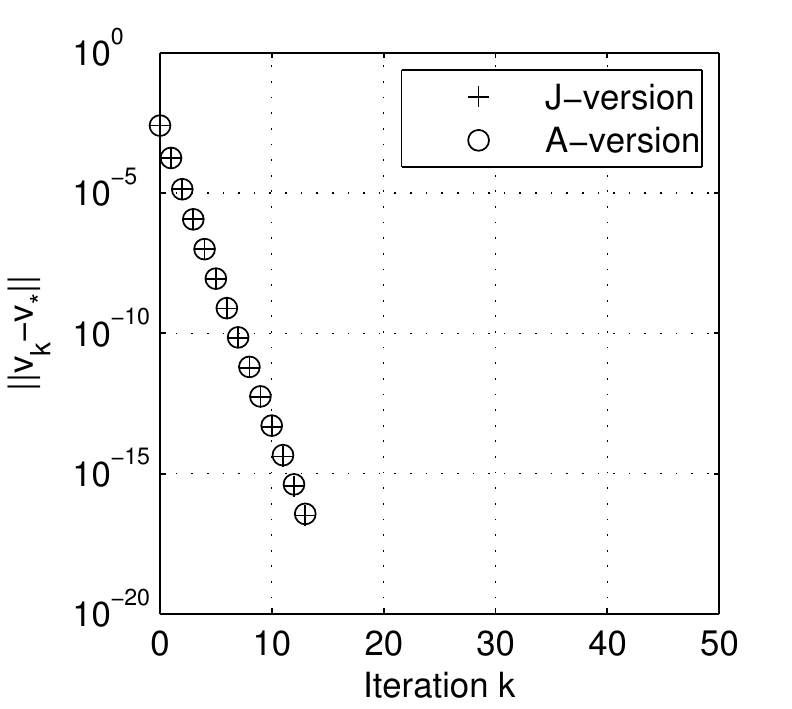}}}%
\subfigure[$\beta=0.1$, $\sigma=\lambda_*+0.3$]{\scalebox{0.6}{\includegraphics{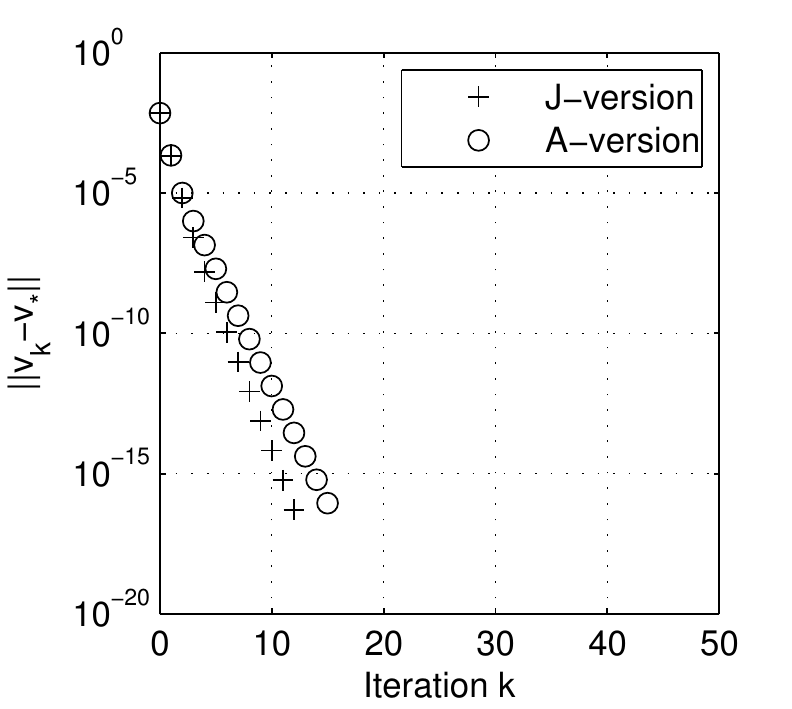}}}
\subfigure[$\beta=0.5$, $\sigma=\lambda_*+0.3$]{\scalebox{0.6}{\includegraphics{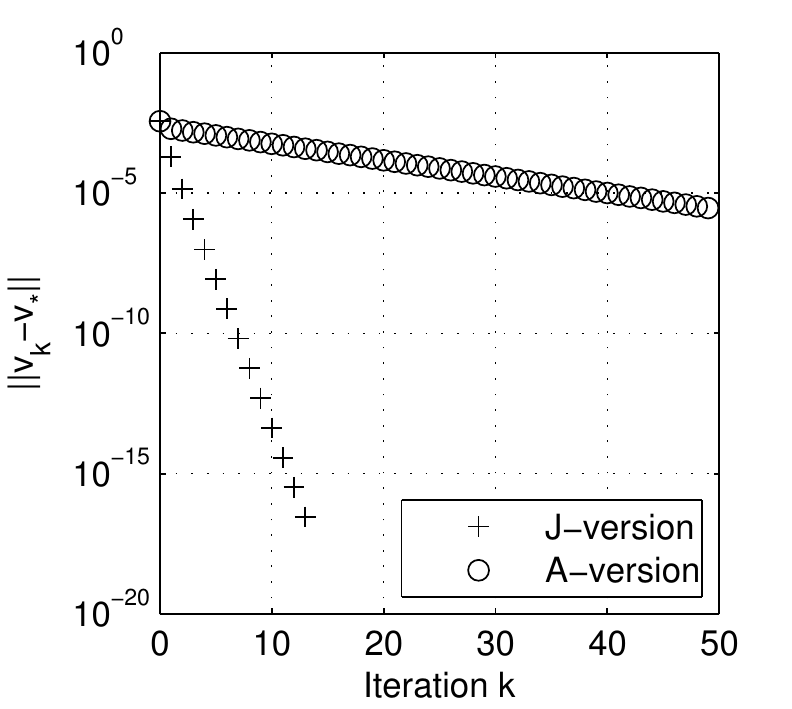}}}\vspace{-0.2cm}%
\subfigure[$\beta=0.6$, $\sigma=\lambda_*+0.3$]{\scalebox{0.6}{\includegraphics{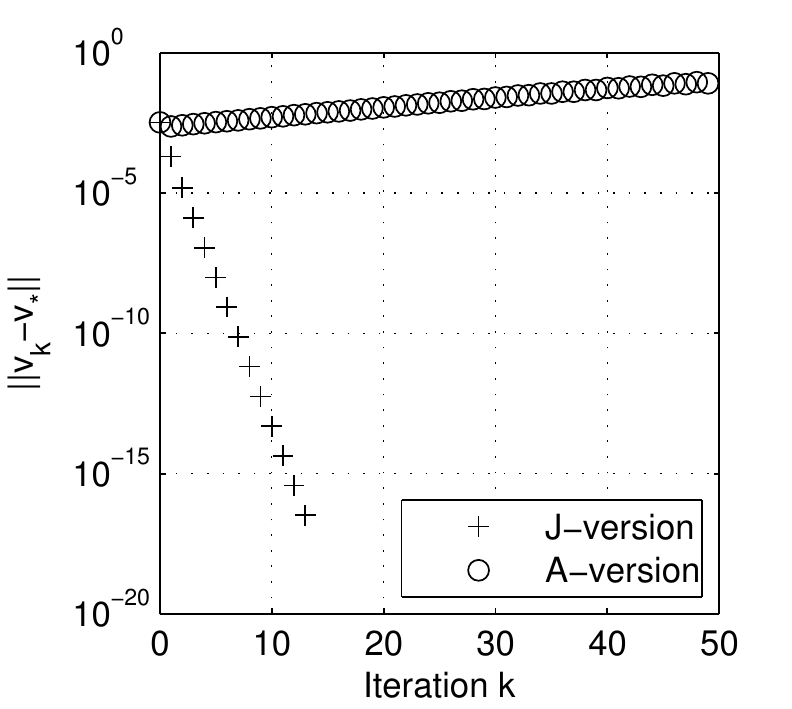}}}\vspace{-0.2cm}
\subfigure[$\beta=1$, $\sigma=\lambda_*+0.3$]{\scalebox{0.6}{\includegraphics{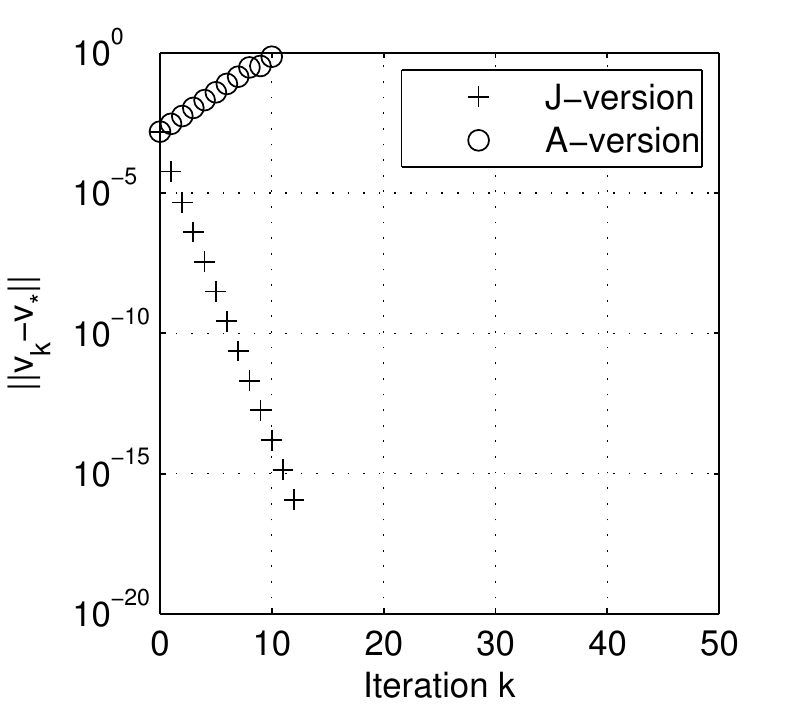}}}\vspace{-0.2cm}%
\subfigure[$\beta=1$, $\sigma= \lambda_*$]{\scalebox{0.6}{\includegraphics{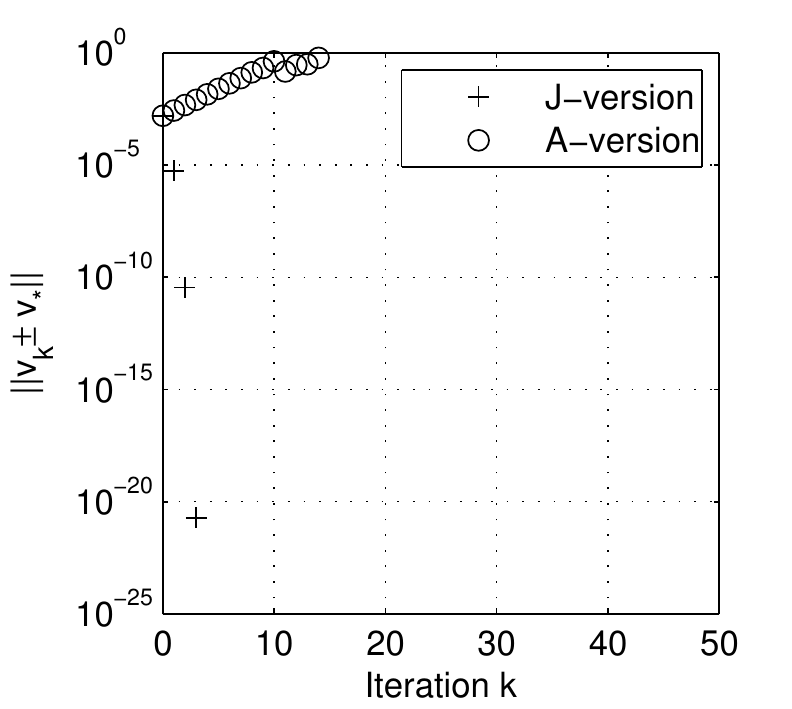}}}\vspace{-0.1cm}%
\end{center}
\caption{
Convergence for the example
in Section~\ref{sect:localconv_exmp}
using \eqref{eq:invit00}, i.e., $J$-version of inverse iteration, 
and the $A$-version of inverse iteration
\eqref{eq:phidef-A}.  \label{fig:AvsJerror}
}
\end{figure}

In Figure~\ref{fig:sinexmp:convfact}
we see that the convergence factor
of the $J$-version approaches zero
when $\sigma\rightarrow\lambda_*$
as predicted by Corollary~\ref{thm:convfact}. The 
convergence factor
for the $A$-version does
clearly not vanish when $\sigma$ 
is close to $\lambda_*$.

\begin{figure}[h]
\begin{center}
\subfigure[]{\scalebox{0.8}{\includegraphics{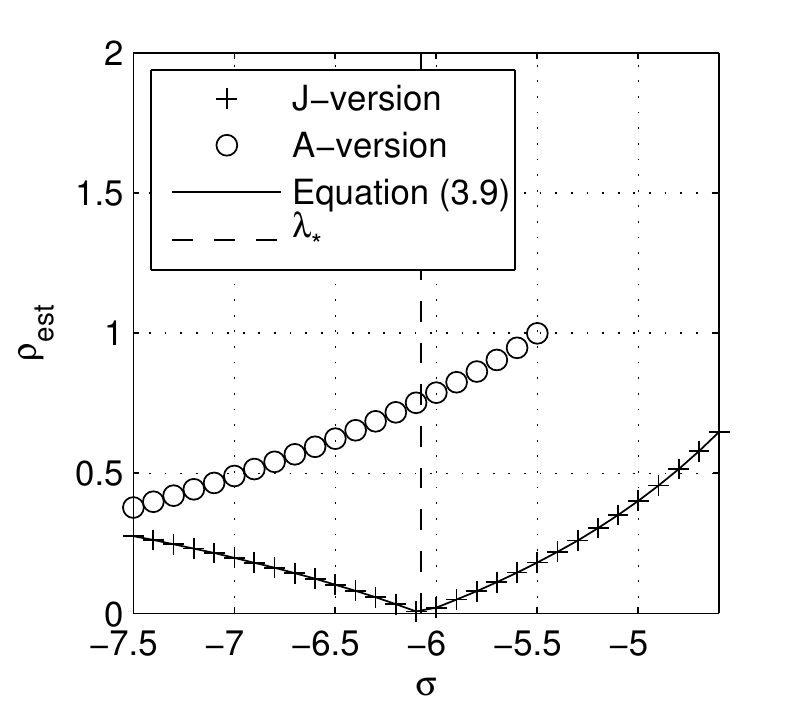}}}%
\subfigure[]{\scalebox{0.8}{\includegraphics{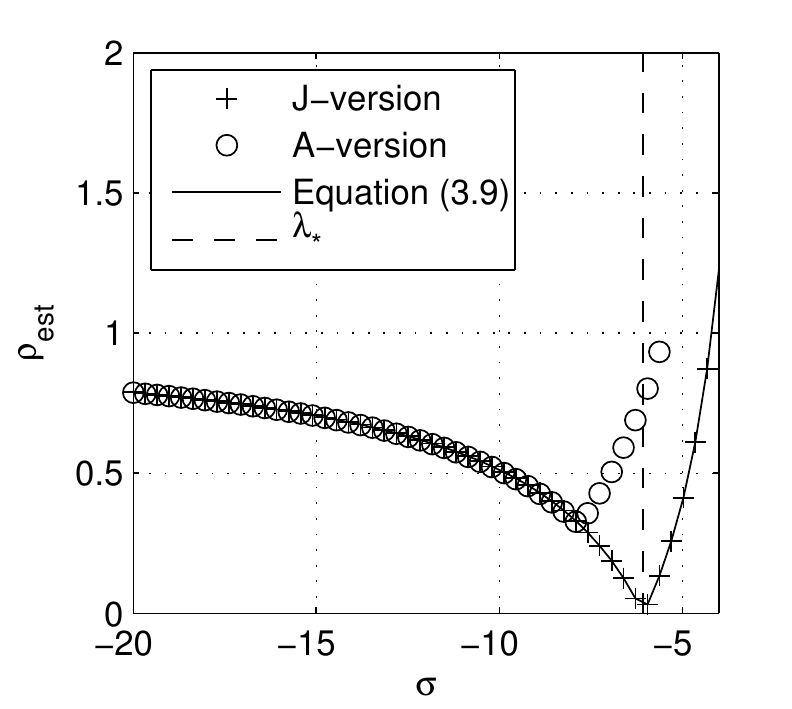}}}
\end{center}
\caption{
Estimated convergence factor for $J$-version
and $A$-version of iteration, where estimation
is done with the quotient $\|v_{k+1}\pm v_K\|/\|v_k\pm v_K\|$
where $v_K$ is a very accurate solution.  Clearly, 
when $\sigma\rightarrow\lambda_*$ 
the convergence factor for the $J$-version
approaches zero, unlike the $A$-version. 
When $\sigma\rightarrow-\infty$ 
the convergence factors coincide. $\beta=0.5$. 
}
\label{fig:sinexmp:convfact}
\end{figure}

\section{Interpretation as the discretization of an ODE}\label{sect:flow}
\subsection{The normalized ODE associated with {(\ref{eq:nepv})}}

In order to provide further insight into 
the non-local behavior of
the iteration, we will  
consider the following ODE
\begin{equation}
 z'(t)=-A(z(t))z(t)\label{eq:flow0}
\end{equation}
and the function corresponding to the normalization of the  solution  
\begin{equation}
  y(t):=q(t)z(t) \textrm{ where }
  q(t):=\frac{1}{\|z(t)\|}.
\label{eq:ydef}
\end{equation}
The function $y$ 
also satisfies an ODE, which does not involve $z$. 
This can be seen from the following reasoning. 
Note that 
\[
 q'(t)= \frac{-1}{2(z(t)^Tz(t))^{3/2}}2z(t)^Tz'(t)=\frac{1}{\|z(t)\|^3}z(t)^TA(z(t))z(t)=q(t)y(t)^TA(y(t))y(t).
\]
By applying the product rule to \eqref{eq:ydef}, we have
\[
  y'(t)=q(t)z'(t)+q'(t)z(t)=\\
-q(t)A(z(t))z(t)+q(t)(y(t)^TA(y(t))y(t))z(t).
\]
Hence, the normalized function $y(t)$ satisfies 
the differential equation 
\begin{equation}
  y'(t)=p(y(t))y(t)-A(y(t))y(t),\label{eq:flow}
\end{equation}
where $p(y(t))$ is the \emph{Rayleigh quotient} defined by \eqref{eq:rq}.
The ODE \eqref{eq:flow} will be used
as a tool to understand the iteration \eqref{eq:invit00}.
For this reason several properties of the ODE  and
some of its relations with the nonlinear eigenvalue problem \eqref{eq:nepv}
will be particularly important.

\begin{theorem}[ODE properties]\label{thm:flowprops}
Consider the ODE \eqref{eq:flow} 
with initial condition $y(0)=y_0$ with $\|y_0\|=1$. 
The ODE and its solution have the following properties. 
\begin{itemize}
\item[(a)] The norm $\|y(t)\|=1$  independent of $t$.
\item[(b)] Any stationary point of the ODE \eqref{eq:flow} is a normalized  eigenvector
of \eqref{eq:nepv}.
\item[(c)] Any  normalized eigenvector  of \eqref{eq:nepv} 
is a stationary point of the ODE \eqref{eq:flow}.
\item[(d)] Let $y_*$ be a stationary point of the ODE \eqref{eq:flow}. 
The stationary point is 
stable if the eigenvalue $\lambda_*=p(y_*)$ 
is a simple dominant eigenvalue of $J(y_*)$, i.e., 
if $\lambda_*$ is simple eigenvalue of $J(y_*)$ and 
\[
  \lambda_*<\re(\mu)\,\,\textrm{ for any }\mu\in\lambda(J(y_*))\backslash\{\lambda_*\}.
\]
\item[(e)] Let $y_*$ be a stationary point of the ODE \eqref{eq:flow}. 
The stationary point is unstable if there is an
eigenvalue $\mu$ of $J(y_*)$ 
such that 
\[
  \re(\mu)< \lambda_*=p(y_*).
\] 
\end{itemize}
\end{theorem}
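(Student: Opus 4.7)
The plan is to dispose of parts (a)--(c) directly from the definitions, and to reduce parts (d) and (e) to a spectral analysis of the linearization $f'(y_*)$, where $f(y):=p(y)y-A(y)y$ denotes the right-hand side of \eqref{eq:flow}.

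For (a), I would differentiate and obtain $\frac{d}{dt}\|y\|^2 = 2 y^T y' = 2 p(y)\|y\|^2 - 2 y^T A(y) y$, which vanishes by the definition \eqref{eq:rq} of $p$; hence $\|y(t)\|^2 = \|y_0\|^2 = 1$. For (b), a stationary point satisfies $p(y_*) y_* = A(y_*) y_*$, exhibiting $(p(y_*),y_*)$ as an eigenpair of \eqref{eq:nepv}. For (c), a normalized eigenpair $(\lambda_*, v_*)$ has $p(v_*) = v_*^T A(v_*) v_* = \lambda_*$, so $p(v_*) v_* - A(v_*) v_* = 0$.

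For (d) and (e), the main task is to compute $f'(y_*)$. Using the product rule together with the definition of $J$, one has $f'(y) = y\,\nabla p(y)^T + p(y) I - J(y)$. I would first derive $\nabla p(y_*) = J(y_*)^T y_* - \lambda_* y_*$ by differentiating $p(y) = y^T A(y) y / (y^T y)$ and using both the symmetry of $A(y)$ and the identity $A'(y)[w] y = J(y) w - A(y) w$, which is just the product rule applied to the definition of $J$. Substituting, and invoking Lemma~\ref{thm:scalinv} to obtain $J(y_*) y_* = A(y_*) y_* = \lambda_* y_*$, then collapses this into the clean form
\[
  f'(y_*) = (I - y_* y_*^T)(\lambda_* I - J(y_*)).
\]

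For the spectrum, note that $f'(y_*) y_* = 0$ accounts for the forbidden radial direction (the sphere is invariant by (a)). Using $J(y_*) y_* = \lambda_* y_*$, in any orthonormal basis $\{y_*, q_2, \ldots, q_n\}$ the matrix $J(y_*)$ is block upper-triangular with diagonal blocks $\lambda_*$ and $B \in \RR^{(n-1)\times(n-1)}$; under the simplicity hypothesis of (d), $\lambda(B) = \lambda(J(y_*))\setminus\{\lambda_*\}$, while in the setting of (e) any $\mu \in \lambda(J(y_*))$ with $\re\mu<\lambda_*$ automatically lies in $\lambda(B)$ since $\re\lambda_* = \lambda_*$. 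A direct block computation shows that the restriction of $f'(y_*)$ to $\{y_*\}^\perp$ equals $\lambda_* I_{n-1} - B$, so its spectrum is $\{\lambda_* - \mu : \mu \in \lambda(B)\}$. Since the sphere is an invariant manifold, the stability of $y_*$ on the unit sphere is governed by this tangent-space linearization, and the principle of linearized stability yields (d) --- all eigenvalues $\lambda_* - \mu$ have negative real part precisely when $\re\mu > \lambda_*$ --- and (e): any $\mu$ with $\re\mu < \lambda_*$ produces a positive-real-part eigenvalue, forcing instability. The hardest step will be the careful derivation of $\nabla p(y_*)$, which requires combining the symmetry of $A$ with the scaling-invariance identities of Lemma~\ref{thm:scalinv}; once that is in hand, the block-triangular reduction successfully identifies the tangent spectrum with $\lambda(J(y_*))\setminus\{\lambda_*\}$ despite $J(y_*)$ being non-symmetric in general.
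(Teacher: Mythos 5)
Your proof is correct and follows essentially the same route as the paper: linearize the flow at $y_*$, arrive at the same matrix $(I-y_*y_*^Ty)(\lambda_* I - J(y_*))$ wait--- at the same matrix $(I-y_*y_*^T)(\lambda_* I - J(y_*))$, and read off stability from the eigenvalues $\lambda_*-\mu$. Your treatment is in fact slightly more careful than the paper's in two spots --- the block-triangular identification of the tangent spectrum and the explicit handling of the zero eigenvalue in the radial direction via invariance of the sphere, which the paper glosses over when it asserts that ``all eigenvalues have negative real part.''
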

\begin{proof}
The statements (a)-(c) follow directly from the derivation of \eqref{eq:flow}. 
In order to study the stability of
the stationary point $y_*$, 
let $\Delta(t):=y(t)-y_*$ 
denote the deviation from 
the stationary solution.
Then, by Taylor expansion of \eqref{eq:flow}
we have
\[
  \Delta'(t)=y'(t)=p(y(t))y(t)-A(y(t))y(t)=
\left[p(y_*)I+y_*p'(y_*)-J(y_*)\right]\Delta(t) +O(\Delta(t))^2.
\]
We can now insert $p(y_*)=\lambda_*$ and 
$p'(y_*)=y_*^T(J(y_*)-\lambda I)$ since  $\|y_*\|=\|y_0\|=1$.
Hence, 
 the behavior is to first order given by
\begin{equation}
 \tilde{\Delta}'(t)=F(y_*) \tilde{\Delta}(t),\label{eq:Deltap}
\end{equation}
where
\[
  F(y_*)=  \lambda_* I+y_*y_*^T (J(y_*)-\lambda I)
-J(y_*)=
(I-y_*y_*^T)(\lambda_* I-J(y_*))
\]
The eigenvalues of $F(y_*)$ have the form
\[
  \lambda_*-\mu
\]
where $\mu$ is an eigenvalue of $J(y_*)$, but not equal 
to $\lambda_*$. 
The ODE \eqref{eq:Deltap} is unstable
if $F(y_*)$ has eigenvalues with positive real part,
and the ODE \eqref{eq:Deltap} is stable
if all eigenvalues have negative real part. 
The 
statements (d)-(e) follow  from the sign of 
$\re(\lambda_*-\mu)=\lambda_*-\re(\mu)$, which,
provided it is not zero, gives a conclusion
about the local stability of \eqref{eq:Deltap} and \eqref{eq:flow}.
%
%
%

\end{proof}

\subsection{Discretization of the ODE and equivalence with
inverse iteration}

In the previous subsection we saw how
the ODE \eqref{eq:flow}
was related to the nonlinear 
eigenvalue problem \eqref{eq:nepv}; 
in particular we showed that the stationary solutions were 
equivalent to the eigenvectors of \eqref{eq:nepv}.
We will now show how this connection can
be used by showing that  the 
proposed version of inverse 
iteration \eqref{eq:invit00}
can be interpreted as a discretization
of the ODE, allowing
us to study the general behavior of the iteration by
studying the ODE.

Consider first the Rosenbrock-Euler method 
\cite[Chapter IV.7]{Hairer:1996:ODE}, i.e.,
the backward Euler method where the linear
system is solved with one step of Newton-Raphsons method. 
The backward Euler method with step-length $h$ 
applied to \eqref{eq:flow} is 
\begin{equation}
   \frac{\tilde{y}_{k+1}-\tilde{y}_k}{h}\approx
p(\tilde{y}_{k+1})\tilde{y}_{k+1}-A(\tilde{y}_{k+1})\tilde{y}_{k+1}.\label{eq:beuler}
\end{equation}
%
We approximate the first term in \eqref{eq:beuler} by its linearization, 
\[
  p(\tilde{y}_{k+1})\tilde{y}_{k+1}\approx p(\tilde{y}_k)\tilde{y}_k+(p'(\tilde{y}_k)\tilde{y}_k+p(\tilde{y}_k))(\tilde{y}_{k+1}-\tilde{y}_k)=
             p(\tilde{y}_k)\tilde{y}_{k+1},
\]
where  we denoted $p'(z):=\frac{\partial}{\partial z}p(z)\in\RR^{1\times n}$.
We also used that  $p'(z)z=0$ for any $z$ which follows
from Lemma~\ref{thm:scalinv} since $p(z)$ is scaling invariant.
The second term in \eqref{eq:beuler} is now also approximated
by its linearization. By 
combining the approximation with \eqref{eq:JAidentity}, we have 
\[
  A(\tilde{y}_{k+1})\tilde{y}_{k+1}\approx A(\tilde{y}_k)\tilde{y}_k+
J(\tilde{y}_k)(\tilde{y}_{k+1}-\tilde{y}_k)
=J(\tilde{y}_k)\tilde{y}_{k+1}.
\]
Hence, the Rosenbrock-Euler method applied
to \eqref{eq:flow} 
is equivalent to 
\[
\frac{1}{h}(\tilde{y}_{k+1}-\tilde{y}_k)=p(\tilde{y}_k)\tilde{y}_{k+1}
-J(\tilde{y}_k)\tilde{y}_{k+1}
\]
and also
\begin{equation}
\left(\left(\frac{1}{h}-p(\tilde{y}_k)\right)I 
+J(\tilde{y}_k)
\right)\tilde{y}_{k+1}=
\frac{1}{h}\tilde{y}_k.\label{eq:ykdef}
\end{equation}

The ODE \eqref{eq:flow} has, according to 
Theorem~\ref{thm:flowprops},
a solution with constant norm.
Despite this, the
discretization $\tilde{y}_k$ does
not have constant norm, due to the
approximation error introduced in the stepping scheme.
This issue can be addressed by carrying
out a \emph{standard projection}
described, e.g., in \cite[Algorithm IV.4.2]{Hairer:2006:GEOMETRIC},
which is a common procedure to incorporate normalization constraints.
In this case, it reduces to subsequently normalizing the iterate,
i.e., setting $y_{k+1}:=\frac{1}{\|\tilde{y}_{k+1}\|}\tilde{y}_{k+1}$. 
Since  $p$ and $J$ are scaling invariant, we have
$p(\tilde{y}_k)=p(y_k)$ and $J(\tilde{y}_k)=J(y_k)$, 
and we can express one step of the integration
scheme as 
\begin{equation}
   y_{k+1}
=\alpha_k
\left(\left(\frac{1}{h}-p(y_k)\right)I 
+J(y_k)
\right)^{-1}y_k,\label{eq:rosenbrock}
\end{equation}
where $\alpha_k=1/\left\|\left(\left(\frac{1}{h}-p(y_k)\right)I 
+J(y_k)
\right)^{-1}y_k\right\|_2$.

The main result 
in this section is based on the observation 
that \eqref{eq:rosenbrock} is equivalent to the
proposed version of inverse iteration  \eqref{eq:invit00} 
if the shift $\sigma$ and step-length $h$ are
related in a particular way. This connection is summarized
in the following theorem.

\begin{theorem}[ODE discretization equivalence]\label{thm:flowequiv}
Let the sequences $\{y_k\}_{k=0}^\infty$
and $\{v_k\}_{k=0}^\infty$ be
generated in the following way.
\begin{itemize}
\item[(a)] Let $v_k$ be the iterates generated by inverse
iteration with \eqref{eq:invit00} 
with a given shift $\sigma\in\RR$. 
\item[(b)] Let $y_k$ be the
discretization of \eqref{eq:flow} using
\begin{itemize}
\item[$\bullet$]  the Rosenbrock-Euler method,
\item[$\bullet$]  a standard projection step which imposes the normalization, and 
\item[$\bullet$]  the step-length 
\begin{equation}
  h_k=\frac{1}{p(y_k)-\sigma}\label{eq:hkdef}
\end{equation}
where $\sigma$ is the shift used to generate $v_k$. 
\end{itemize}
\end{itemize}
Moreover, suppose they are
started in the same way, i.e., let $y_0=v_{0}$.
Then,
\[
   v_{k}=y_{k}
\]
for all $k\in \mathbb{N}$. 
\end{theorem}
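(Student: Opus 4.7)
The plan is a straightforward induction on $k$, exploiting the fact that the step-length $h_k$ in \eqref{eq:hkdef} has been engineered precisely so that the shift $1/h_k-p(y_k)$ appearing in the Rosenbrock-Euler step \eqref{eq:rosenbrock} collapses to $-\sigma$.

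The base case $v_0=y_0$ is given by hypothesis. For the inductive step, assume $v_k=y_k$. Since $p$ and $J$ are scaling invariant (Lemma~\ref{thm:scalinv}), the inductive hypothesis implies $p(y_k)=p(v_k)$ and $J(y_k)=J(v_k)$, so the choice \eqref{eq:hkdef} yields
\[
\frac{1}{h_k}-p(y_k)=\bigl(p(y_k)-\sigma\bigr)-p(y_k)=-\sigma.
\]
Substituting this into the Rosenbrock-Euler update \eqref{eq:rosenbrock} gives
\[
y_{k+1}=\alpha_k\bigl(J(y_k)-\sigma I\bigr)^{-1}y_k,
\]
with $\alpha_k=1/\|(J(y_k)-\sigma I)^{-1}y_k\|_2$. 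This is literally the inverse iteration step \eqref{eq:invit00} applied to $y_k=v_k$, and so it coincides with $v_{k+1}$.

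The only nontrivial point to verify before the induction closes is that both iterations are well defined at step $k$, i.e., that the Rosenbrock-Euler scheme \eqref{eq:rosenbrock} really reduces to the normalization-followed-by-linear-solve form shown above (this was the content of the derivation preceding \eqref{eq:rosenbrock}, relying on $p'(z)z=0$ from Lemma~\ref{thm:scalinv} and on the identity $J(u)u=A(u)u$ of \eqref{eq:JAidentity}) and that the matrix $J(v_k)-\sigma I$ is invertible so that both $v_{k+1}$ and $y_{k+1}$ exist. The latter is implicit in the hypothesis that the inverse iteration sequence $\{v_k\}$ is generated by \eqref{eq:invit00}. No real obstacle arises; the content of the theorem is entirely in the clever matching of $h_k$ to $\sigma$, after which the two update rules are algebraically identical.
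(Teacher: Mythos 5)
Your proof is correct and follows exactly the route the paper intends: the derivation of \eqref{eq:rosenbrock} preceding the theorem does all the work, and the theorem reduces to the observation that the choice \eqref{eq:hkdef} makes $1/h_k - p(y_k) = -\sigma$, so the two update maps coincide and a trivial induction from $y_0=v_0$ finishes it. The paper in fact leaves this argument implicit, so your explicit induction (including the remark on invertibility of $J(v_k)-\sigma I$) is, if anything, slightly more complete than the source.
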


%



%
%
%
We summarize some immediate consequences. 
\begin{itemize}
\item 
The iteration \eqref{eq:invit00}
is an approximation of the trajectory of the ODE \eqref{eq:flow}
if $h$ is chosen sufficiently small, i.e., if 
$\sigma$ is chosen sufficiently negative. 
\item Suppose the
ODE converges to a stationary solution, which indeed is the
case for several examples, and can in particular be proven for
the example in Section~\ref{sect:GP}. In this
situation the stationary 
solution is a solution to the ODE, and the 
iteration will converge to an eigenvector  for sufficiently 
negative $\sigma$. 
\item For the linear case, i.e., $A(v)=B$, 
we can explicitly write
down the solution to \eqref{eq:flow0} using
the Jordan form, and thereby 
study \eqref{eq:flow} which will converge to the dominant
eigenvalue unless $y(0)$ is chosen in a particular way. 
Moreover, Theorem~\ref{thm:flowprops}d-e
shows that if the dominant eigenvalue is simple, it will be the
only stable solution. 
Analogously,
 for nonlinear problems which resemble the linear
case in the sense that the ODE \eqref{eq:flow}  only
has
one stable stationary solution and the ODE is convergent 
(unless started in a particular way), the iteration will converge to
that solution for sufficiently negative $\sigma$.

\end{itemize}

\begin{remark}[ODE interpration for $A$-version]
The $A$-version \eqref{eq:phidef-A} can be 
interpreted as discretization of the ODE \eqref{eq:flow}, but
where $J$ is approximated by $A$ and $h$ is chosen 
according to \eqref{eq:hkdef}. In fact,
when the $A$-version is applied to the example
in Section~\ref{sect:GP} it is equivalent
to an available in the literature;
more precisely the discretization
of the normalized gradient flow given in 
\cite[Equation (2.20)-(2.21)]{Bao:2004:BOSEEINSTEIN}.
\end{remark}

\subsection{Illustration and numerical justification of ODE discretization}\label{sect:flowill}

Consider a situation where we do not have any
approximation of any eigenvalue of the example 
in  Section~\ref{sect:localconv_exmp}. 
To characterize this
situation we 
carry out a number of simulations with
random starting vectors for different shifts. 
Such a statistical simulation is shown in 
Figure~\ref{fig:ode_ill}a. We observe
that the for $\sigma<-5$, the
iteration appears to always converge to the dominant solution.
This property can be explained using the developed theory. 
When $\sigma\ll \lambda_*$ we can apply Theorem~\ref{thm:flowequiv}
and when $|\sigma-\lambda_*|\ll 1$ we
can apply the local convergence theorem Theorem~\ref{thm:localconv}.

\begin{figure}[t]
\begin{center}
\subfigure[Statistics with  random starting vectors. 
 ]{\scalebox{0.6}{\includegraphics{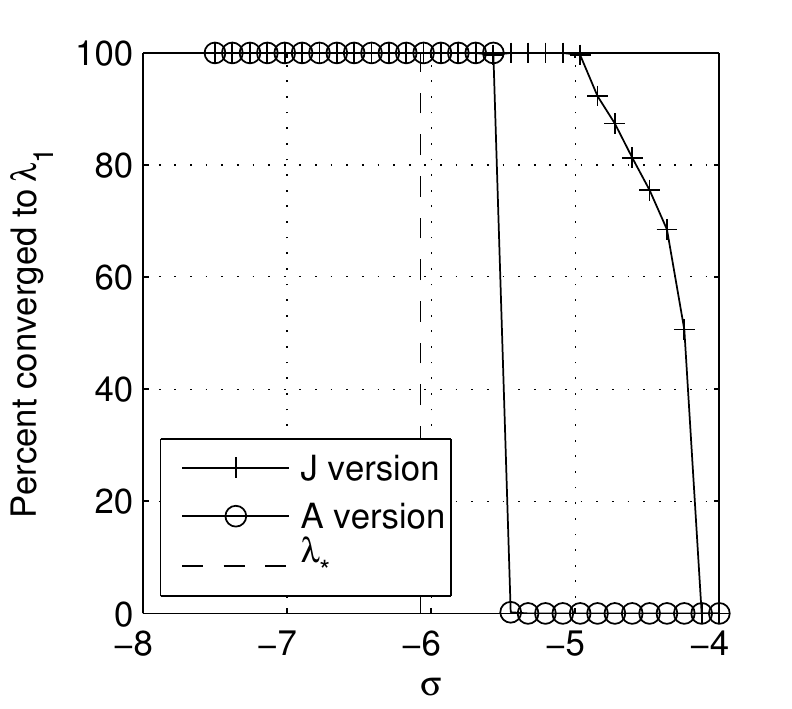}}}%
\subfigure[$\beta=0$]{\scalebox{0.6}{\includegraphics{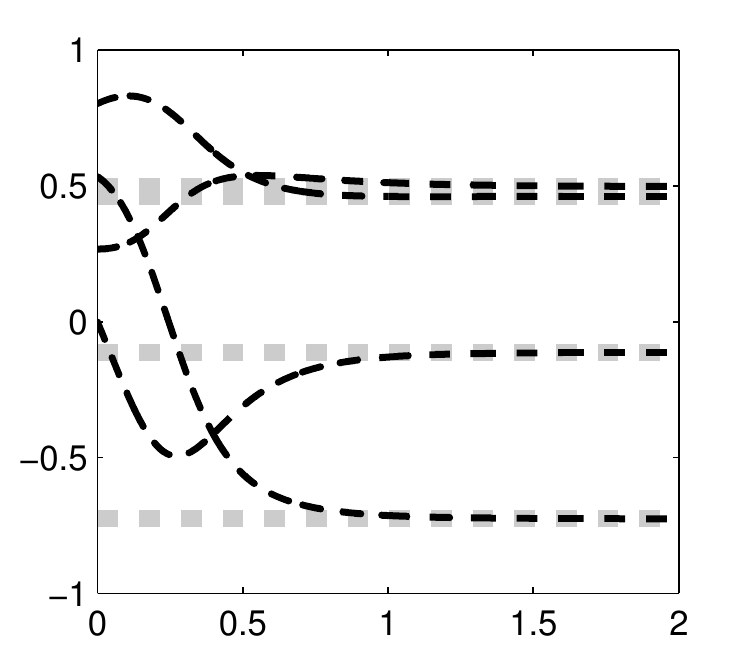}}}%
\subfigure[$\beta=1$]{\scalebox{0.6}{\includegraphics{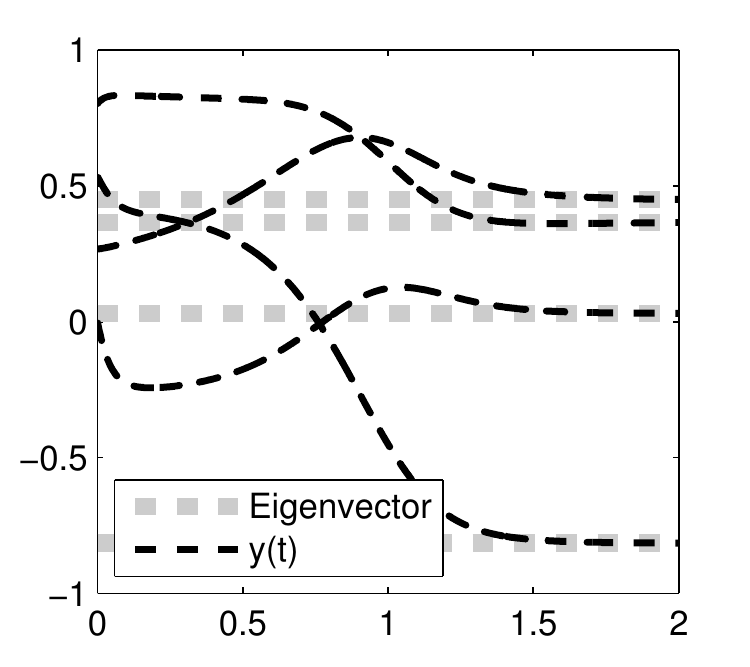}}}
\end{center}
\caption{Convergence of the ODE (sine-example) to the dominant
eigenvector.
\label{fig:ode_ill}
}
\end{figure}

We computed the solution for the ODE \eqref{eq:flow}
for this example using a standard ODE integrator, 
with a random starting value. The trajectory 
is shown in Figure~\ref{fig:ode_ill}b-c. Clearly, 
the ODE converges to the dominant eigenvector. 
Similarly,
in Figure~\ref{fig:ode_ill2} we have shown
the computed approximate trajectories 
using \eqref{eq:invit00} ($J$-version)
and \eqref{eq:phidef-A} ($A$-version). We 
see in accordance with the interpretation
of $\sigma$ as a particular choice of step-length
(via \eqref{eq:hkdef}) that 
the iteration follows the trajectories better if $\sigma$
is more negative. Moreover,
with the  $J$-version 
we can take larger steps than with the $A$-version,
as the $A$-version does not follow
the ODE in Figure~\ref{fig:ode_ill2}a. 

The experience
with the ODE \eqref{eq:invit00}
 for this particular example indicates that
the ODE  converges to a stationary solution for all $\beta\in[0,1]$.  
Moreover, for several choices of $\beta$, among
all eigenvectors, there appears to be only one situation where
$\lambda_*$ is the dominant eigenvalue of $J(v_*)$. This
can be seen in Figure~\ref{fig:ode_ill_J}.
From Theorem~\ref{thm:flowprops}
we consequently know that the ODE \eqref{eq:invit00}
only has one stable stationary solution. 
Combined with the convergence observation, the ODE will always
converge to this solution. Hence, Theorem~\ref{thm:flowequiv}
implies that for sufficiently negative $\sigma$, 
we will follow the trajectory
of the ODE and eventually converge to the dominant eigenvector. 
 \addtolength{\abovecaptionskip}{-0.5cm}
 \addtolength{\belowcaptionskip}{-1.5cm}
 \addtolength{\textfloatsep}{-1.5cm}
\begin{figure}[h]
\begin{center}
\subfigure[$\sigma=-7$]{\scalebox{0.5}{\includegraphics{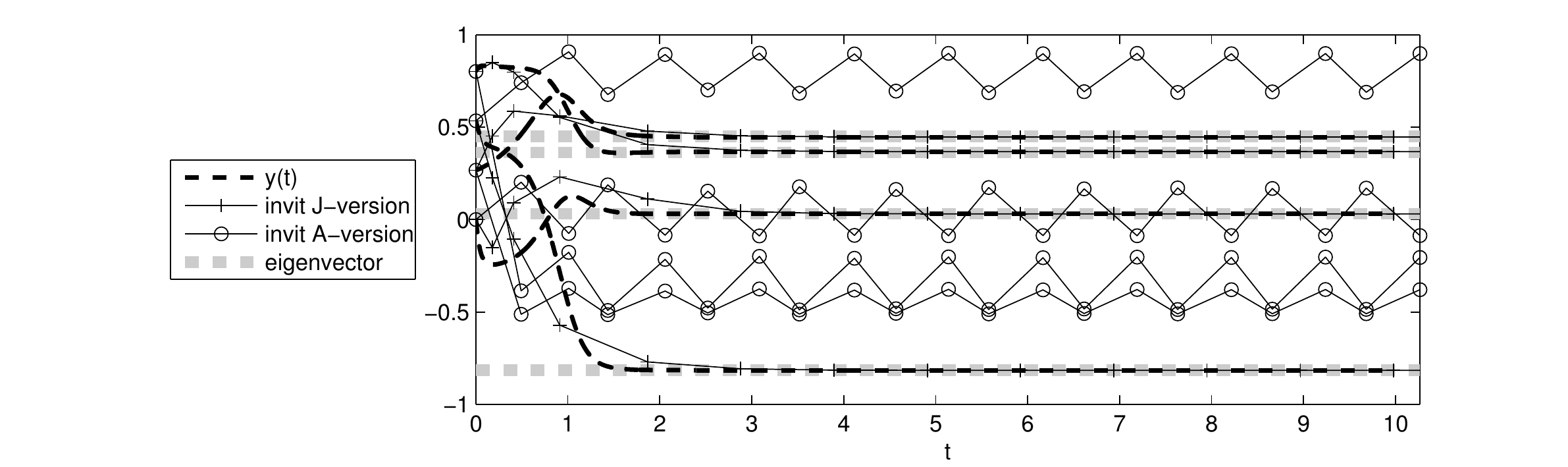}}}\vspace{-0.5cm}
\subfigure[$\sigma=-10$]{\scalebox{0.5}{\includegraphics{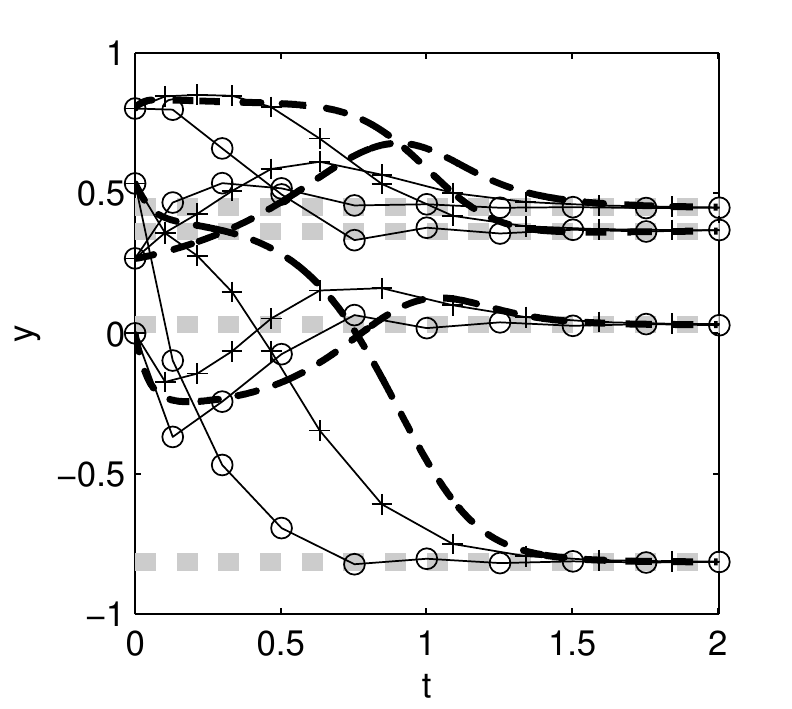}}}%
\subfigure[$\sigma=-20$]{\scalebox{0.5}{\includegraphics{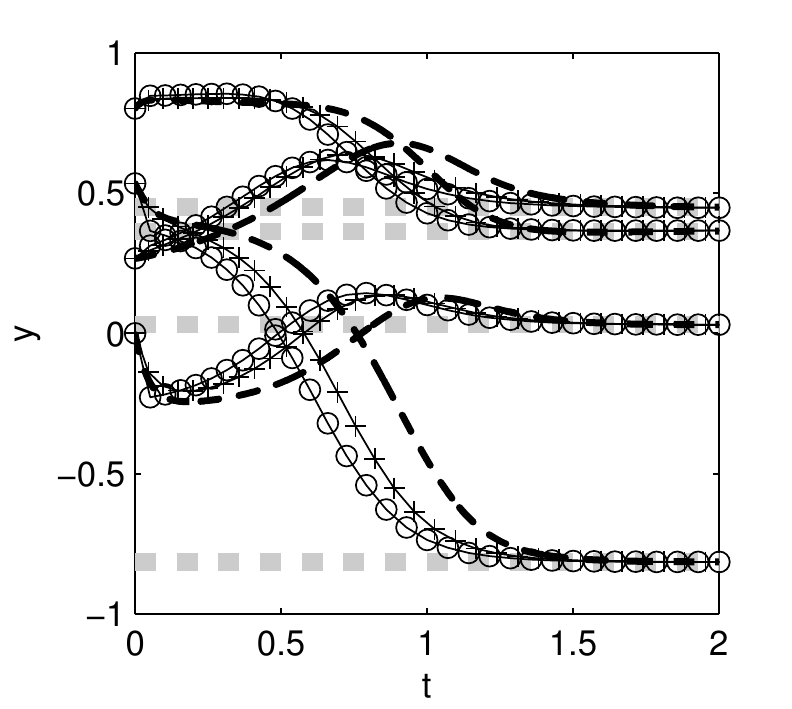}}}%
\subfigure[$\sigma=-50$]{\scalebox{0.5}{\includegraphics{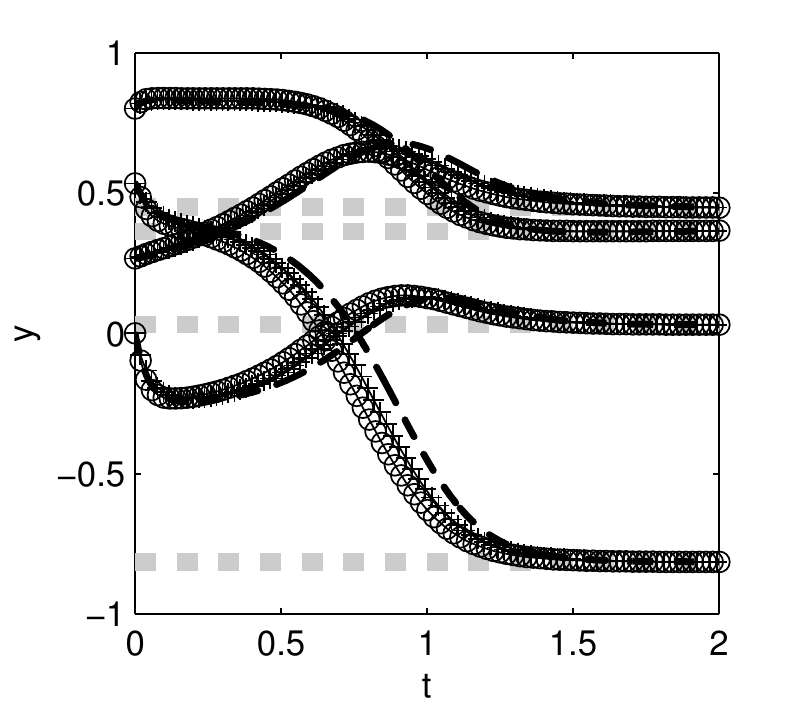}}}%
\end{center}
\caption{$J$-version is
better in the sense that it follows
the ODE also for larger $h$ than for the $A$-version.
The dominant eigenvalue is $\lambda_*\approx -6.01$.
\label{fig:ode_ill2}
}\end{figure}

 \addtolength{\abovecaptionskip}{0.5cm}
 \addtolength{\belowcaptionskip}{1.5cm}
 \addtolength{\textfloatsep}{1.5cm}

\begin{figure}[h]
\begin{center}
\subfigure[$\beta=0$]{\scalebox{0.6}{\includegraphics{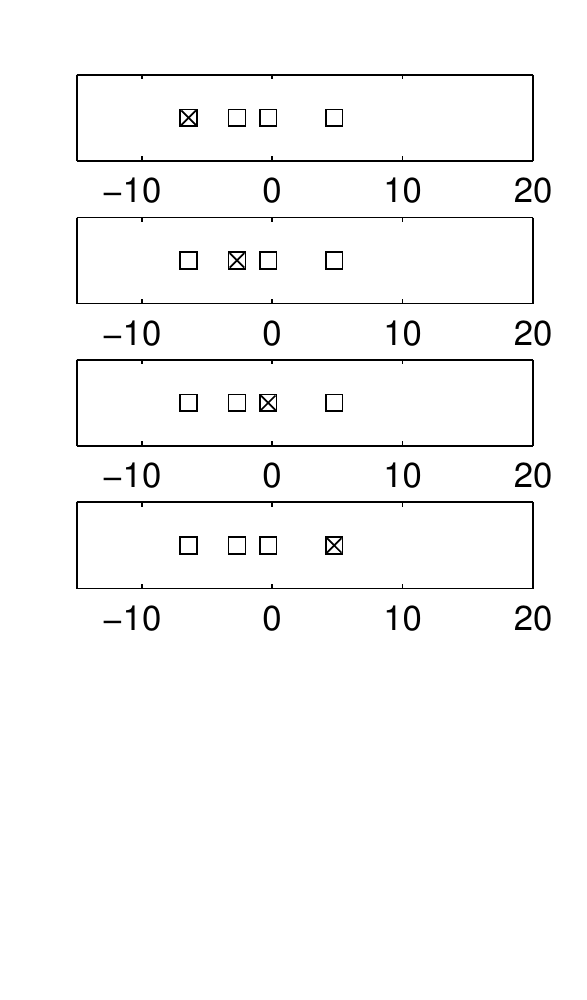}}}
\subfigure[$\beta=0.5$]{\scalebox{0.6}{\includegraphics{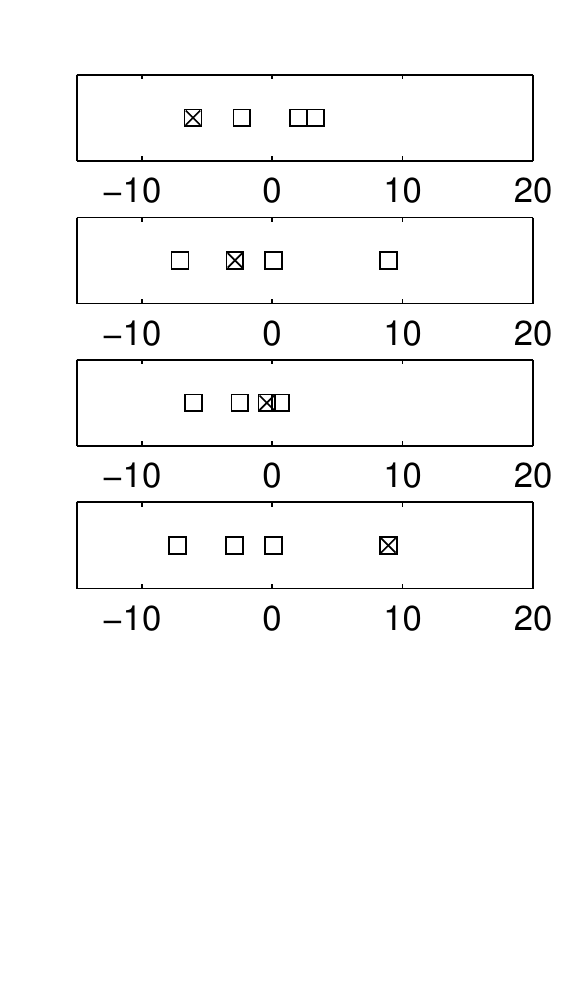}}}
\subfigure[$\beta=1$]{\scalebox{0.6}{\includegraphics{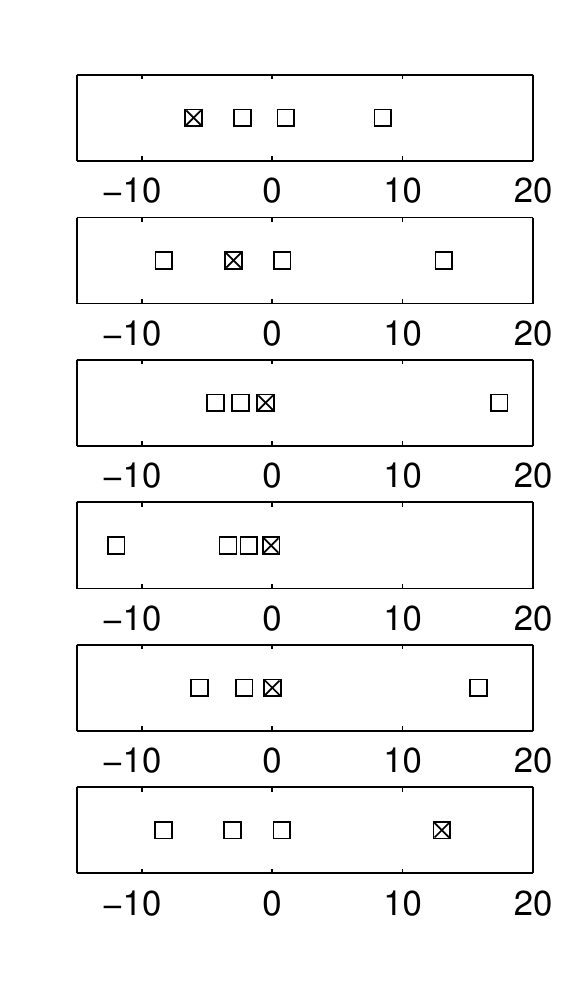}}}
\end{center}
\caption{
The eigenvalues of $J(v_*)$ for the example discussed 
in Section~\ref{sect:flowill}.
In every subfigure, $\square$ denotes the eigenvalues 
of $J(v_*)$ for an eigenpair $(\lambda_*,v_*)$ of \eqref{eq:nepv} and 
$\lambda_*$ is denoted by  $\times$.
In this notation, a stable stationary solution 
described by Theorem~\ref{thm:flowprops}d-e
corresponds to a situation where 
 $\boxtimes$ is to the left of all $\square$.
%
\label{fig:ode_ill_J}
}\end{figure}

\section{Application to the Gross-Pitaevskii equation}\label{sect:GP}

\subsection{Discretization and reformulation to the  form (\ref{eq:nepv})}
To a achieve the physical phenomeon of Bose--Einstein condensation,
weakly interacting particles (such as $^4$He atoms) are trapped and
cooled down to very low temperatures, thereby undergoing a phase
transition into a \emph{Bose--Einstein condensate} (BEC), a superfluid
phase of collective quantum mechanical motion. A standard model for a
BEC is the Gross--Pitaevskii equation (GPE), a nonlinear partial
differential eigenvalue equation. This equation exhibits interesting
topological behavior. In particular, it reproduces experimentally
observed quantized vortices in a BEC within a rotating trap, the
hallmark of superfluidity.

Upon discretization, the GPE becomes an eigenvalue problem of the form
(\ref{eq:nepv}), and to illustrate the iteration (\ref{eq:invit00}) we will
consider the case of a rotating BEC on the domain $\DD = (-L,L)\times
(-L,L)$ in two dimensions, whose GPE reads
\begin{equation}
  \left(-\frac{1}{2}\Delta  - i \Omega \pder{}{\phi} +
    V(x,y)\right)\psi(x,y) + b 
  |\psi(x,y)|^2 \psi(x,y) = \lambda \psi(x,y),\quad (x,y)\in\DD, 
  \label{eq:nlevpv-continuous}
\end{equation}
where $\psi(x,y)$ is the condensate wavefunction (not to be confused
with $\psi$ in Eqn.~(\ref{eq:psidef})) such that
$|\psi(x,y)|^2$ is proportional to the \emph{particle density} at
$(x,y)$, and 
where $\pder{}{\phi}$ is the angular derivative given by
\begin{equation}
  \pder{}{\phi} = y \pder{}{x} - x \pder{}{y}.
  \label{eq:angular-diff}
\end{equation}
The function $V(x,y)$ is the external potential function which
describes the shape of the particle trap. Here, we choose an
asymmetric harmonic oscillator potential $V(x,y) = (x^2 + 1.2y^2)/2$.
We are mostly interested in those solutions to \eqref{eq:nlevpv-continuous}
which are physically important, e.g., the ground state.
 
The boundary condition is chosen as $\psi(x,y)=0$ on
$(x,y)\in\partial\DD$.  Moreover, $\psi$ is normalized such that
$\|\psi\|_{L^2(\DD)} = 1$.  The physical constant $b$ controls the
strength of the interactions between the bosons, and $\Omega$ the
angular velocity of the rotation, which are here selected to
$b=200$ and $\Omega=0.85$, respectively.

The domain is discretized with $N+2$ 
equidistant grid points in each
physical direction leading to $n_c=N^2$ 
interior grid points with spacing $\Delta x$ in both directions. 
For completeness we provide the matrices resulting
from the discretization.
We use
 the approximations of $\Delta$ and $\partial/\partial\phi$, 
\[
  L_N= D_{2,N}\kron I + I\kron D_{2,N},\;\;
L_{\phi,N} = \diag(y_1,\ldots,y_N)\kron D_N - D_N\kron \diag(x_1,\ldots,x_N),
\]
where $D_N$ and $D_{2,N}$ are the central 
difference approximation of the derivative and
the second derivative respectively.  
Moreover,  we let 
\[
\tilde{V}=(V(x_1,y_1),\ldots,V(x_N,y_1), V(x_1,y_2),\ldots,V(x_N,y_2),\ldots V(x_N,y_N))\in\RR^{n}
\]
Then, with
\[
 \tilde{A}_0=-\frac12 L_N-i\Omega L_{\phi,N}+
\diag(\tilde{V})
\]
the discretized problem is 
\begin{equation}
   \tilde{A}(z)z=\lambda z\label{eq:GP_complex}
\end{equation}
where 
\begin{equation}
  \tilde{A}(z) = \tilde{A}_0 + 
\beta \diag(|z|)^2,
\end{equation}
and to be consistent with $\|\psi\|_{L^2(\DD)}=1$ we must have
$\psi(x_j, y_k) = (\Delta x)^{-1} z_{N(k-1)+j}$ and $\beta = (\Delta x)^{-2} b$.

The problem \eqref{eq:GP_complex} is a complex problem of dimension $n_c$,
not satisfying the scaling invariance \eqref{eq:invariance}. 
In order to transform this problem to the form \eqref{eq:nepv}
we introduce the following notation 
\begin{eqnarray*}
  A(v):=
\begin{pmatrix}
\re \tilde{A}(\frac{v_1+iv_2}{\sqrt{v_1^Tv_1+v_2^Tv_2}}) &
-\im \tilde{A}(\frac{v_1+iv_2}{\sqrt{v_1^Tv_1+v_2^Tv_2}})\\
\im \tilde{A}(\frac{v_1+iv_2}{\sqrt{v_1^Tv_1+v_2^Tv_2}})&
\re \tilde{A}(\frac{v_1+iv_2}{\sqrt{v_1^Tv_1+v_2^Tv_2}})
\end{pmatrix}=\\
\begin{pmatrix}
\re \tilde{A}_0 & -\im \tilde{A}_0\\
\im \tilde{A}_0 & \re \tilde{A}_0
\end{pmatrix}
+\frac{\beta}{v^Tv}
B(v),\label{eq:GPE_Adef}
\end{eqnarray*}
and
\[
B(v):=
\begin{pmatrix}\diag(v_1)^2+\diag(v_2)^2 & 0 \\ 0 & \diag(v_1)^2+\diag(v_2)^2\end{pmatrix},\;\;
v=\begin{pmatrix}v_1\\v_2\end{pmatrix}.
\]
With this definition of the matrix $A(v)$ we have
transformed $\tilde{A}(z)$ 
by treating the real and imaginary parts of $z$ as separate variables. 
Consequently, the complex problem \eqref{eq:GP_complex} 
of dimension $n_c$ 
is  equivalent to the real problem 
\begin{equation}
 A(v)v=\lambda v,\label{eq:GPE_A}
\end{equation}
of dimension $n=2n_c$ which does satisfy \eqref{eq:invariance}.

\subsection{Jacobian and exploitation of Jacobian structure} 
The resulting matrix $A(v)$ in \eqref{eq:GPE_A} is a sparse
matrix for any $v$ due to the
finite-difference discretization. In order
to carry out \eqref{eq:invit00} 
we need the Jacobian $J(v)$ and
we need to be able to efficiently solve the corresponding
shifted linear system of equations.
It turns out that the
Jacobian is in general a full matrix, making
the direct application of sparse solvers inefficient. 
Fortunately, the linear
system involving the Jacobian can
be decomposed into two linear systems
of equations involving a matrix
which is sparse.
The derivation is based on the 
fact that the Jacobian is
the sum of a sparse matrix and a rank-one matrix.
For such structures the
Sherman-Morrison-Woodbury formula 
\cite[Section~2.1.3]{Golub:2007:MATRIX} is a standard technique. 


\begin{theorem}[The Jacobian associated with the Gross-Pitaevskii equation]\label{thm:GPE}
The Jacobian for the nonlinear
eigenvalue problem \eqref{eq:GPE_A}  
corresponding to the Gross-Pitaevskii equation 
is given by
\begin{multline}
J(v):=\frac{\partial}{\partial v}(A(v)v)
=
\label{eq:GP_jac}
\begin{pmatrix}
\re A_0 & -\im A_0\\
\im A_0 & \re A_0
\end{pmatrix}+\\
\frac{\beta}{v^Tv}
\left[
\begin{pmatrix}
3\diag(v_1)^2+\diag(v_2)^2 & 2\diag(v_1)\diag(v_2)\\
2\diag(v_1)\diag(v_2) & \diag(v_1)^2+3\diag(v_2)^2 
\end{pmatrix}
-\frac{2}{v^Tv}B(v)vv^T
\right],
\end{multline}
where $v^T=(v_1^T,v_2^T)$ with $v_1,v_2\in\RR^{n/2}$. 
Moreover, suppose $v^Tv=1$ and let 
\begin{multline*}
  C:=\\
\begin{pmatrix}
\re A_0 & -\im A_0\\
\im A_0 & \re A_0
\end{pmatrix}+
\beta
\begin{pmatrix}
3\diag(v_1)^2+\diag(v_2)^2 & 2\diag(v_1)\diag(v_2)\\
2\diag(v_1)\diag(v_2) & \diag(v_1)^2+3\diag(v_2)^2 
\end{pmatrix}-\sigma I
\end{multline*}
and suppose $C$ is non-singular.
Then, the solution
to the linear system  $(J(v)-\sigma I)^{-1}v$ 
can be expressed as 
\begin{equation}
\left(
J(v)-\sigma I 
\right)^{-1}v=u_1+u_2,\label{eq:Jsigmainv_GP}
\end{equation}
where 
\[
   u_1=C^{-1}v 
\]
and 
\[
  u_2=\frac{v^Tu_1}{1-v^Tw}  w 
\;\;\textrm{ with }\;\; 
w=2\beta C^{-1} B(v)v.
\]
\end{theorem}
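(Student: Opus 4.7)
The plan is to prove the two statements separately: first derive the Jacobian formula \eqref{eq:GP_jac} by straightforward differentiation, then obtain the decomposition \eqref{eq:Jsigmainv_GP} by recognizing that $J(v)-\sigma I$ is a rank-one update of $C$ and applying the Sherman--Morrison formula.

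For the Jacobian, I would split $A(v)v$ into its linear part and its nonlinear part. The block matrix built from $\re \tilde A_0$ and $\im \tilde A_0$ is constant in $v$, so its contribution to $J(v)$ is itself. For the nonlinear part, write $A(v)v - (\text{linear part})v = \frac{\beta}{v^T v} B(v)v$ and apply the product rule to the two factors $\frac{1}{v^T v}$ and $B(v)v$. The derivative of $1/(v^T v)$ is $-2 v^T/(v^T v)^2$, which produces the rank-one subtracted term $-\frac{2}{v^T v}B(v)v\,v^T$ in \eqref{eq:GP_jac}. For the inner Jacobian $\partial (B(v)v)/\partial v$, I would compute entrywise: the $i$-th entry of $B(v)v$ in the top block is $v_{1i}^3 + v_{2i}^2 v_{1i}$ and in the bottom block is $v_{1i}^2 v_{2i} + v_{2i}^3$, whose partial derivatives with respect to $v_{1j}$ and $v_{2j}$ give exactly the four diagonal blocks $3\diag(v_1)^2+\diag(v_2)^2$, $2\diag(v_1)\diag(v_2)$, $2\diag(v_1)\diag(v_2)$, and $\diag(v_1)^2+3\diag(v_2)^2$ that appear in the statement. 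Combining these two pieces yields \eqref{eq:GP_jac}.

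For the decomposition of $(J(v)-\sigma I)^{-1}v$, I would specialize to $v^T v = 1$, in which case the formula \eqref{eq:GP_jac} simplifies to $J(v)-\sigma I = C - 2\beta\, B(v) v\, v^T$. This is $C$ minus the rank-one matrix $w_0 v^T$ with $w_0 := 2\beta B(v)v$. Invoking the Sherman--Morrison identity gives
\begin{equation*}
  (C - w_0 v^T)^{-1} = C^{-1} + \frac{C^{-1} w_0 v^T C^{-1}}{1 - v^T C^{-1} w_0},
\end{equation*}
assuming the scalar denominator is nonzero (which follows from the assumed invertibility of $J(v)-\sigma I$ together with $C$). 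Applying this to $v$ and setting $u_1 = C^{-1}v$ and $w = C^{-1} w_0 = 2\beta C^{-1} B(v) v$ produces exactly $u_1 + u_2$ with $u_2 = \frac{v^T u_1}{1 - v^T w} w$, as claimed.

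The only step requiring care is the entrywise differentiation of $B(v)v$, since a sign error or a mis-coupling between the $v_1$ and $v_2$ blocks would propagate into both formulas. Everything else is essentially bookkeeping: the outer product rule for the $1/(v^T v)$ factor is routine, and the Sherman--Morrison step is standard once the rank-one structure is identified. No separate argument is needed to justify that denominators are nonzero beyond the hypotheses that $C$ and $J(v)-\sigma I$ are invertible.
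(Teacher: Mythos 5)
Your proposal is correct and follows essentially the same route as the paper: the product rule applied to the constant block part and to $\frac{\beta}{v^Tv}B(v)v$ (yielding the rank-one term from differentiating $1/(v^Tv)$), followed by the Sherman--Morrison formula exploiting that $J(v)-\sigma I=C-2\beta B(v)vv^T$ when $v^Tv=1$. The only cosmetic difference is that you compute $\partial(B(v)v)/\partial v$ entrywise, whereas the paper does the same computation via block-matrix identities; both give the same four diagonal blocks.
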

\begin{proof}
The formula for the Jacobian \eqref{eq:GP_jac}
follows from several applications of the product rule.
More precisely,  we have 
\begin{multline*}
\frac{\partial}{\partial v} A(v)v=
\frac{\partial}{\partial v} \left(
\begin{pmatrix}
\re A_0 & -\im A_0\\
\im A_0 & \re A_0
\end{pmatrix}v
+\frac{\beta}{v^Tv}
B(v)v\right)=\\
\begin{pmatrix}
\re A_0 & -\im A_0\\
\im A_0 & \re A_0
\end{pmatrix}+
\frac{\beta}{(v^Tv)^2}
\left(
v^Tv
\left(\frac{\partial}{\partial v} B(v)v\right)-
2B(v)vv^T
\right).
\end{multline*}
The term involving the
Jacobian of $B(v)v$ can now be simplified, 
\begin{multline*}
\frac{\partial}{\partial v} B(v)v=
\frac{\partial}{\partial v} \left(
\diag(v)^2v+
\begin{pmatrix}0 & I \\ I & 0\end{pmatrix}
\diag(v)^2
\begin{pmatrix}0 & I \\ I & 0\end{pmatrix}v
\right)=\\
3\diag(v)^2+
\begin{pmatrix}0 & I \\ I & 0\end{pmatrix}
\diag(v)^2
\begin{pmatrix}0 & I \\ I & 0\end{pmatrix}
+
2\begin{pmatrix}0 & I \\ I & 0\end{pmatrix}
\diag(v)
\begin{pmatrix}0 & I \\ I & 0\end{pmatrix}
\diag(v)
\begin{pmatrix}0 & I \\ I & 0\end{pmatrix}=\\
B(v)+
2\diag(v)^2+
2\begin{pmatrix}0 & I \\ I & 0\end{pmatrix}
\diag(v)
\begin{pmatrix}0 & I \\ I & 0\end{pmatrix}
\diag(v)
\begin{pmatrix}0 & I \\ I & 0\end{pmatrix}.
\end{multline*}
The relation \eqref{eq:GP_jac}
follows from expanding and combining the three terms.

The formula for \eqref{eq:Jsigmainv_GP}
follows from the fact that the last term in 
\eqref{eq:GP_jac} is a rank-one term and 
we can apply the Sherman-Morrison-Woodbury formula 
\cite[Section~2.1.3]{Golub:2007:MATRIX}.
\end{proof}

%
%
\subsection{Specialized ODE interpretation and step-length heuristics}
\label{sec:heuristics}

We will now use the following important
observation. The ODE \eqref{eq:flow} 
is equivalent to the ODE representing a flow
in the technique known as
imaginary time propagation or normalized
gradient flow in, e.g. \cite{Bao:2004:BOSEEINSTEIN}.
In particular, \eqref{eq:flow}
is equivalent to \cite[Equation (2.15)-(2.16)]{Bao:2004:BOSEEINSTEIN}.
This connection allows us to directly
reach conclusions about the ODE \eqref{eq:flow} for the GPE. 
We conclude from \cite[Remark~2.6]{Bao:2004:BOSEEINSTEIN}
that the ODE will converge to a stationary solution. 
It can equivalently be shown
to converge by constructing a Lyapunov function and applying a variant
of Lyapunov's second method.  (See
\cite[Chapter~3]{Hinrichsen:2005:SYSTEMSTHEORY} for Lyapunov's second
method.)
Moreover, the imaginary time propagation technique 
is for physical
reasons known to converge to a physically 
relevant solution (there may be several), 
e.g., the ground state or meta-stable configurations of the system.

%

In light of the equivalence with imaginary time propagation, it is
natural to follow the true flow as closely as possible during
the iteration, while at the same time taking as long time steps as
possible to minimize computation time. Therefore, we propose the
following heuristic for the step length $h$ or equivalently
a choice of $\sigma$.

Suppose that we have computed an approximation $y_k\approx y(t_k)$,
and we now need to determine an appropriate $h$
to compute $y_{k+1}\approx y(t_k+h)$. 
A step-length
choice for the Rosenbrock-Euler method 
is given  in 
Appendix~\ref{sect:localerr}
and in particular formula \eqref{eq:h_choice_f}, 
for a given fixed local error $\veps$. 
%
Note that for our case, i.e., 
the ODE \eqref{eq:flow}, $f(v)=p(v)v-A(v)v$
and 
\[
f'(v)=-(I-vv^T)(J(v)-p(v)I)+vv^T(A(v)-p(v)I)
\]
such that
\[
  f'(v)f(v)=
(I-vv^T)(-J(v)f(v)+p(v)f(v))+
vv^T(A(v)-p(v)I)f(v).
\]
We will now see that this expression  
can be efficiently computed before each step of the iterative method. 
%



This leads us to  the choice of the
shift $\sigma$ we propose to use in this work:
\begin{itemize}
\item Compute $f_k:=f(v_k)=p(v_k)v_k-A(v_k)y_k$
\item Compute $g_k:=J(v_k)f_k$
\item Compute $e_k:=(I - v_k v_k^T)(-e_k + p(v_k)f_k) + v_k v_k^T
  (A(v_k) - p(v_k)I)f_k$
\item Compute $h_k := (2\veps/\|e_k\|)^{1/2}$ using a desired tolerance $\veps$.
\item If $h_k>h_{\rm max}$ set $h_k=h_{\rm max}$.
\item Set $\sigma=p(v_k)-1/h_k$ according to \eqref{eq:hkdef}.
\end{itemize}
The choice to make sure that $h_k\le h_{\rm max}$ is 
to avoid  taking too large steps, for
which the reasoning for step-length above is not supported. 

Note that we do not need form the matrices $I-v_kv_k^T$ or
$J(v_k)$ explicitly when we apply it to the GPE.
It is more efficient to instead
compute the vectors $f_k$, 
$g_k$ and $e_k$ by forming products between vectors
and matrix vector multiplications with
sparse matrices, since, e.g.,  $J(v_k)$ is given as
the sum of a matrix and a rank one matrix in Theorem~\ref{thm:GPE}).


\subsection{Conclusions from computational results}

We carried out the
inverse iteration algorithm
with the heuristic choice of  $\sigma$ 
proposed in Section~\ref{sec:heuristics}, 
for a number of different 
choices of parameters. 
We selected 
$b = 200$, 
$L = 15$, and $\Omega = 0.85$. The number of grid points is $N=300$, i.e., 
the eigenvalue problem \eqref{eq:nepv} is
of size $n=180000$. The step-length heuristic
was chosen with parameter $\veps=2$ 
and $h_{\rm max}=10^4$ (except where otherwise stated).
An initial guess
was  chosen as a random superposition of Gaussians, such that its
length scale is independent of the interior grid size $N$. 
The simulation was completed in $1.2\cdot 10^3$ seconds with 
an implementation of the algorithm in MATLAB running on an Apple
MacBook Pro with a 2.6 GHz Intel i7 quad-core processor.
 

The results of the numerical simulations are presented in 
Figure~\ref{fig:solution-density}-\ref{fig:sigma-lambda}.
In these figures $\psi_k$ denotes the approximate
solution to \eqref{eq:nlevpv-continuous} 
after iteration $k$ iterations, and $K$ denotes
the total number of iterations. 


%

The convergence is visualized in Figure~\ref{fig:convergence}. 
As expected from the ODE interpretation
in Section~\ref{sect:flow} and the fact that the GPE ODE converges,
we eventually reach a solution. 
Moreover, the asymptotic
convergence is fast as the step-length is larger
when the solution is accurate. 
%
The approximations of the solution at different iterates are
given in Figure~\ref{fig:movie}, 
showing  the shape of the function as it
evolves and converges. Clearly, the random initial condition 
turns into a physically 
meaningful approximation after only a few iterations, and the final iterations mostly
change the position of the vortices.

%

We can also observe the
local convergence properties presented in Section~\ref{sect:localconv}
in this application. 
Note first that when $h=h_{\rm max}$, we can see
that $\sigma$ is almost constant. This can be observed
in Figure~\ref{fig:sigma-lambda}.
It is also expected from the fact 
 that the error behaves like
$\|v_k-v_K\| \approx \|y(t_k)-y_*\|\propto C \exp(-at_k)$
in an asymptotic regime. 
This can 
indeed be confirmed
in Figure~\ref{fig:convergence}. The estimate
clearly indicates that $\|v_{k+1}-v_K\|/\|v_k-v_K\|$ should
converge when $t_{k+1}=t_k+h_{\rm max}$.
Hence, with the assumption that $\sigma$ is
approximately constant in the regime where
we take step-length $h_{\rm max}$ 
we have $\sigma\approx \lambda_*-1/h_{\rm max}$.  
We can compute the theoretical convergence
factor for this $\sigma$ 
by using  Corollary~\ref{thm:convfact} 
and computing $\mu_2$ with the Arnoldi 
method (and a matrix-vector product from  Theorem~\ref{thm:GPE}). 
The theoretical convergence factor 
is visualized together with the estimated
convergence factor $\|v_{k+1}-v_K\|/\|v_k-v_K\|$
in Figure~\ref{fig:conv-factor}. The
theoretical convergence
factor is confirmed for two different choices of $h_{\rm max}$.

The heuristic choice of $\sigma$ is visualized in Figure~\ref{fig:conv-factor}.
With the crude estimation of the relation \eqref{eq:hkdef}, $h\approx 1/(\lambda_*-\sigma)$, we
see that the step-length in the beginning is chosen large, in an
intermediate phase it is chosen around the order of magnitude $10$ 
and in the final phase it is chosen larger, and $\sigma$ is again
eventually almost constant.

\begin{figure}
\begin{center}
\scalebox{0.7}{\includegraphics{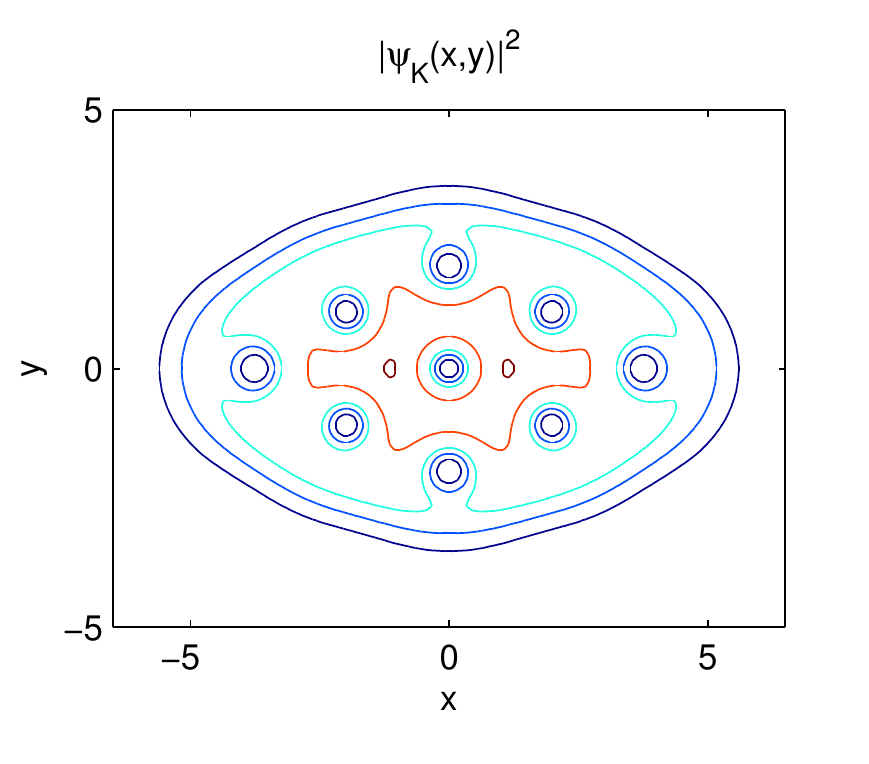}}%
\scalebox{0.7}{\includegraphics{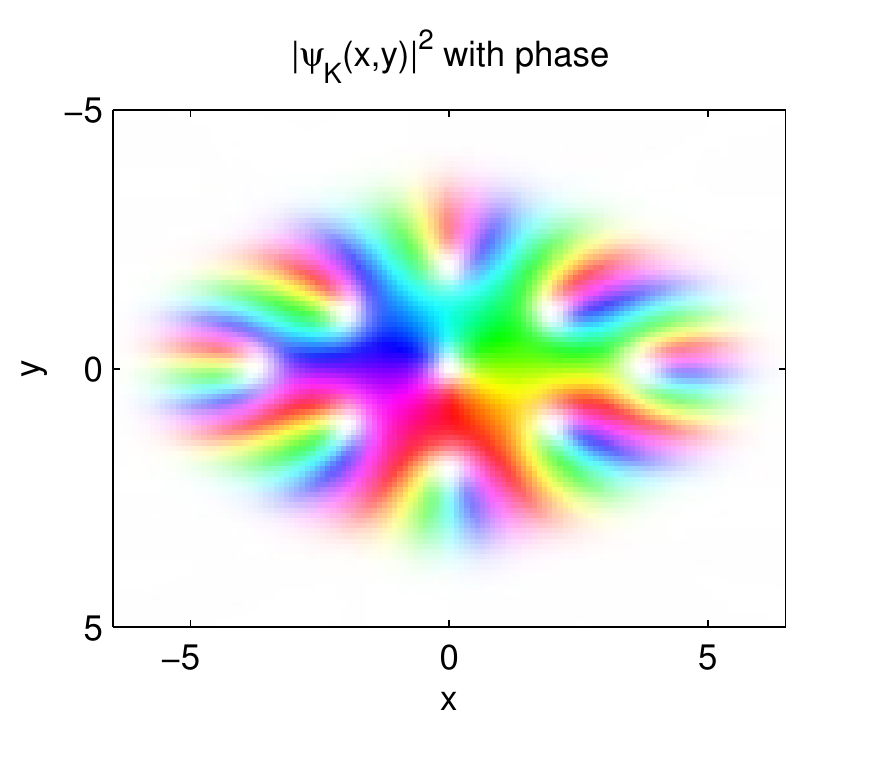}}
\end{center}
\caption{Left: Visualization of the paricle density $|\psi_K(x,y)|^2$ of the
  computed solution. The contour levels are selected as
  $0.05\cdot 10^{-3}$, $0.10\cdot 10^{-3}$, $\cdots$, $0.30\cdot
  10^{-3}$, the outer contours have smaller values. Right:
  Visualization colored according to density and phase. The color
  hue is chosen from the standard color wheel based on the phase
  angle $-i\log (\psi_K/|\psi_K|)$. 
The corresponding eigenvalue approximation
is $\lambda_K = 6.469449$. 
The solution exhibits several vortices arranged in a regular
pattern, as expected for a rotating BEC.
\label{fig:solution-density}}
\end{figure}

\begin{figure}
\begin{center}
\subfigure[]{\scalebox{0.7}{\includegraphics{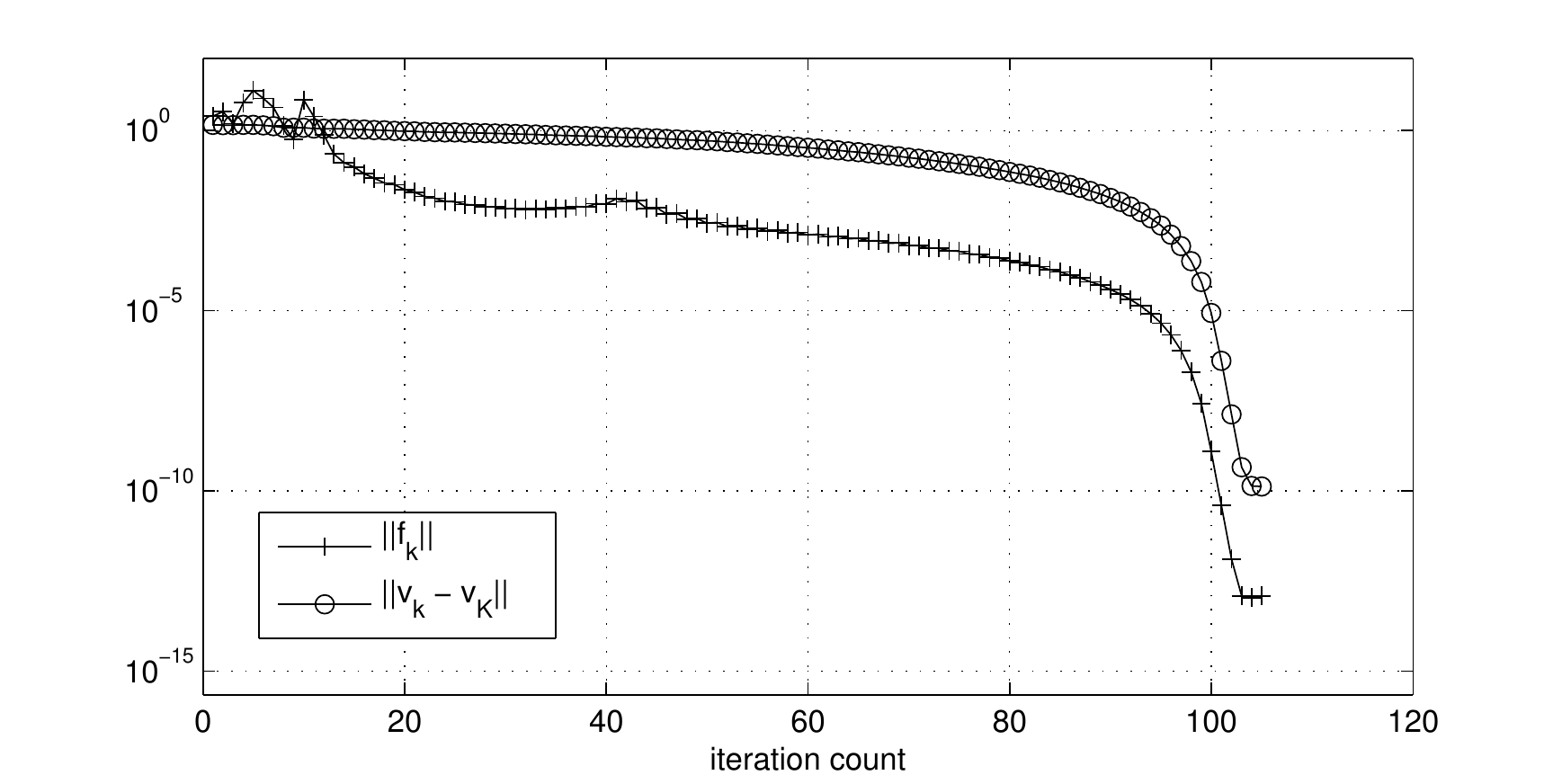}}}
\subfigure[]{\scalebox{0.7}{\includegraphics{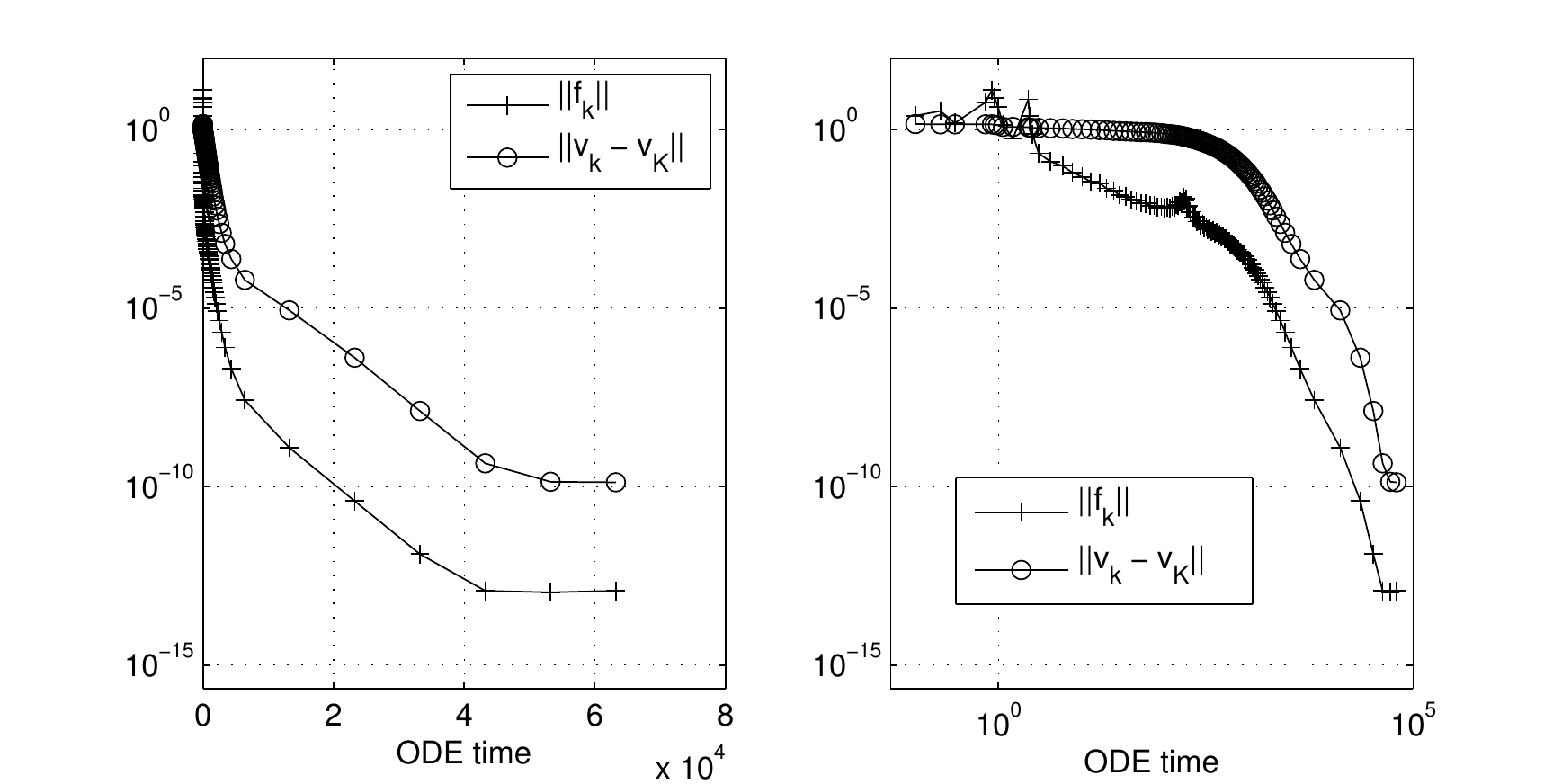}}}
\end{center}
\caption{Plot of norm of residual and absolute error as function of
  iteration count (a) and ODE time $t$ (b). The horizontal axis is
  linear in the left panel, and logarithmic in the right panel. In the final iterations, the time step is $h_\text{max} = 10^4$.
The vector $f_k$ denotes
the residual, i.e.,  $f_k = p(v_k)v_k
- A(v_k)v_k$.
  \label{fig:convergence}}
\end{figure}

\begin{figure}
\begin{center}
\scalebox{0.7}{\includegraphics{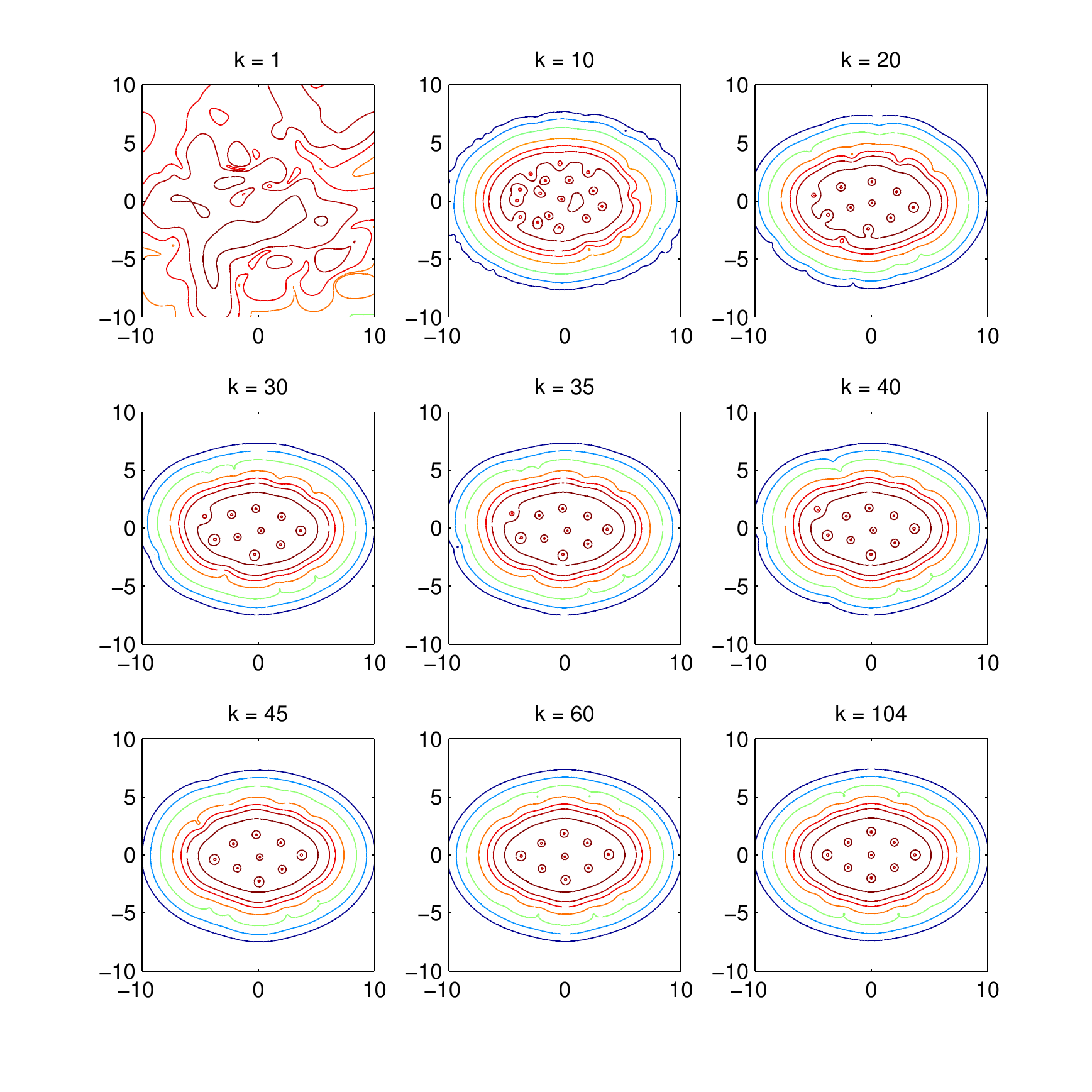}}%
\end{center}
\caption{The approximations $|\psi_k(x,y)|$ for
different iterates. The random starting value
first turns into the general shape of the
solution in $\sim 10$ iterations, and in the remaining iterations,
only the vortices are modified. The
contour levels were selected as  
$10^{-10}$,
$10^{-8}$, 
$10^{-6}$, 
$10^{-4}$, 
$10^{-3}$, 
$10^{-2.5}$, 
$10^{-2}$, 
$10^{-1.75}$, 
$10^{-1.5}$ and 
$10^{-1.25}$. The outer contours have the smaller values.\label{fig:movie}
}
\end{figure}

\begin{figure}
  \begin{center}
    \scalebox{0.7}{\includegraphics{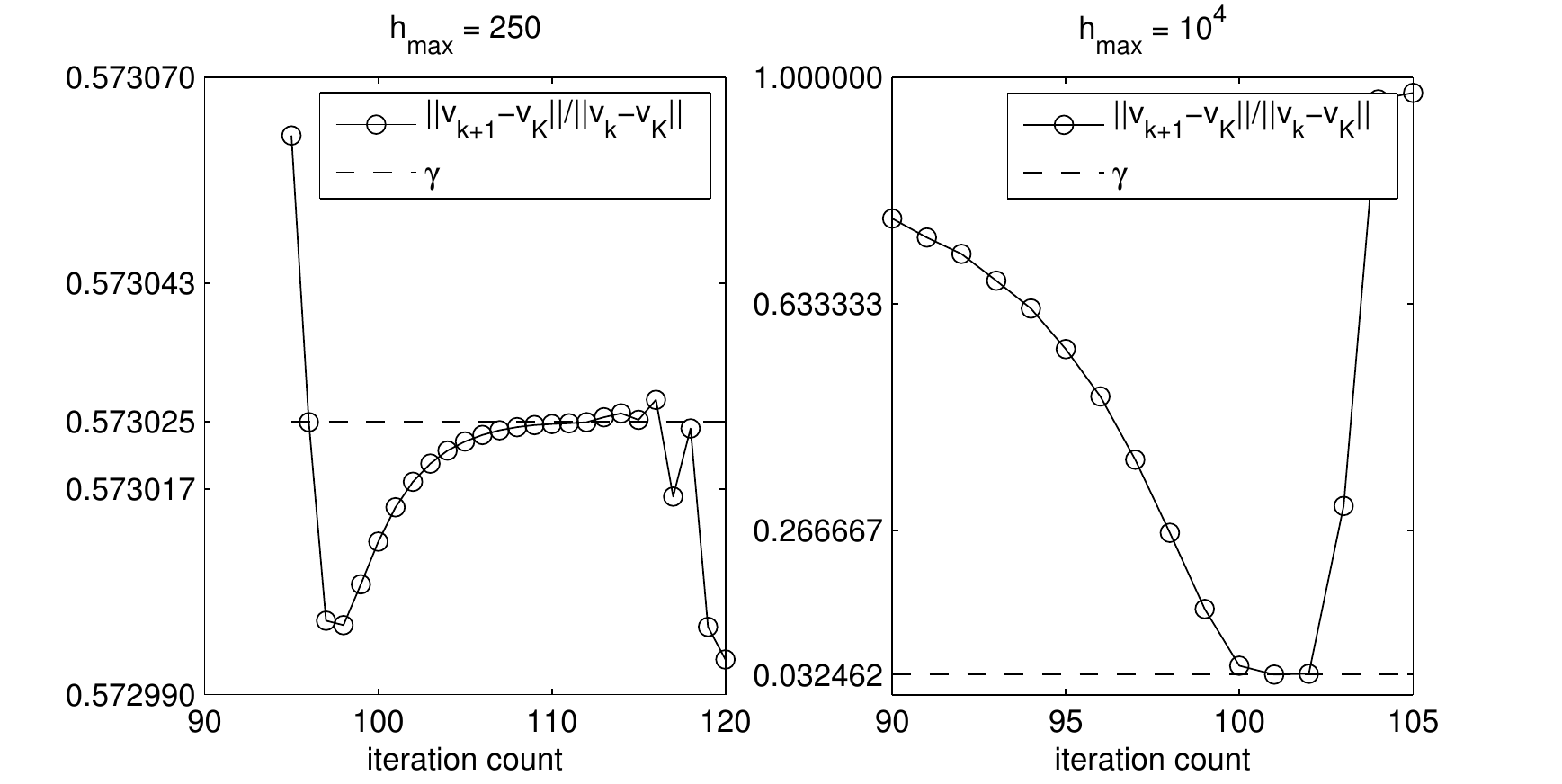}}
  \end{center}
  \caption{Estimation of the  convergence factor
from a calculation with $h_\text{max} =
    250$ and comparison with the theoretical convergence factor
    $\gamma$ (horizontal lines). The asymptotic region starts at
    iteration $k\approx 95$. The deviations for large $k$ is due to
    numerical noise in the error estimate $\|v_k-v_K\|$.\label{fig:conv-factor}}
\end{figure}

\begin{figure}
  \begin{center}
    \scalebox{0.7}{\includegraphics{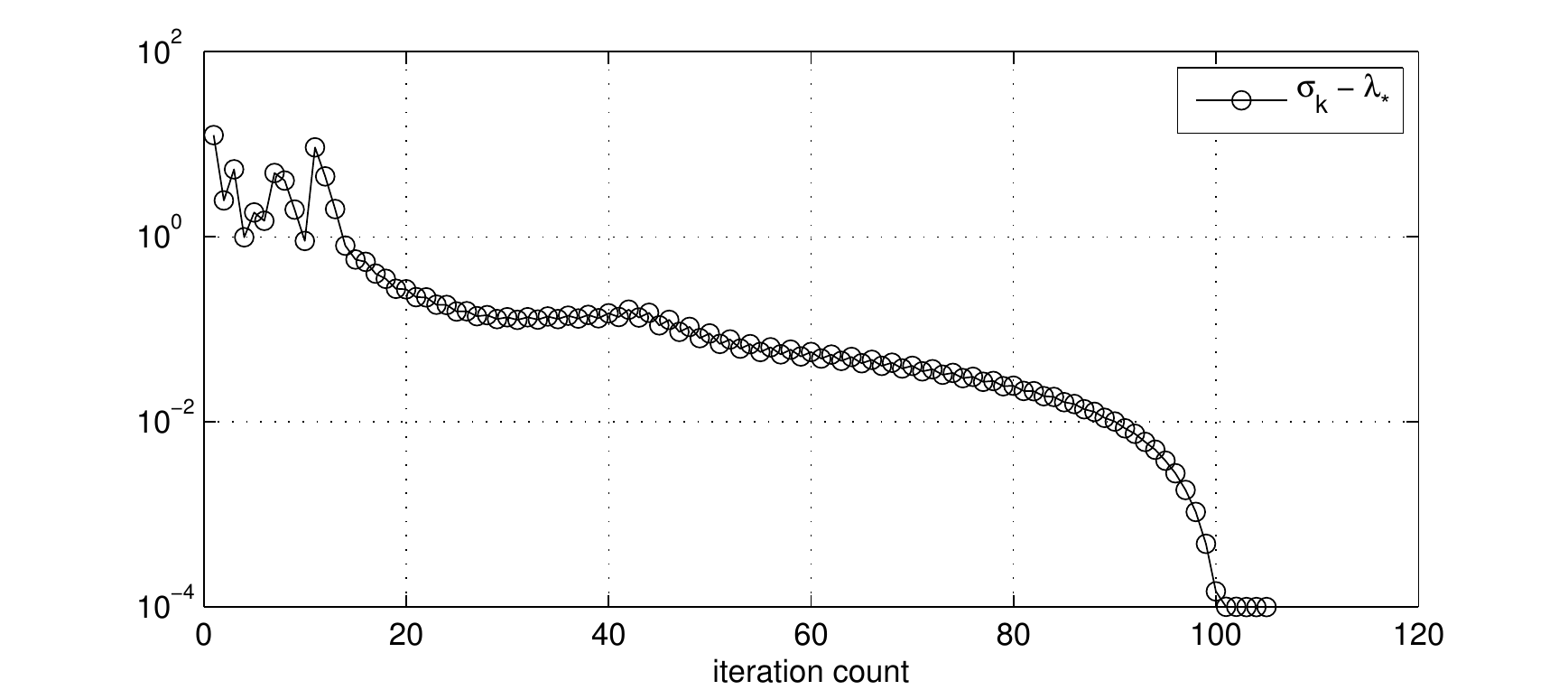}}
  \end{center}
  \caption{Plot of $\sigma_k - \lambda_*$, which quickly becomes small
    and negative.\label{fig:sigma-lambda}}
\end{figure}

%


\section{Concluding remarks} 
The favorable convergence properties
of the  celebrated  inverse iteration algorithm for 
the standard eigenvalue problem 
are well understood. 
An important point in this paper is that 
the generalization we have presented 
have many of the favorable properties
that are present in the inverse iteration
algorithm for standard eigenvalue problems. 
This holds in particular for local convergence
and the interpretation as an ODE. 
We have also illustrated the usefulness of the algorithm
by adapting it to a variant of the Schr\"odinger
equation.

%
%

The connection between the GPE 
and the use of inverse iteration presented
in this paper has
further indirect value.
 For instance,
the tremendous amount of understanding
that is available for inverse iteration
(for  standard eigenvalue problem)
now have the potential to be exploited or adapted 
to this type of nonlinearity. 
%
%

%

{\small \section*{Acknowledgment}

We thank Prof. Tobias Damm (TU Kaiserslautern)
and Prof. Alexander Ostermann (Univ. Innsbruck) 
for providing comments on preliminary 
results related to  this paper. The first 
author gratefully acknowledges the  support of
the Dahlquist research fellowship.
}

\bibliographystyle{plain}
\bibliography{eliasbib,simenbib}

\appendix
\section{Derivation of local error
and step-length 
 for a variant of  Rosenbrock-Euler} \label{sect:localerr}
The error of the Rosenbrock-Euler
method has been studied in the context
of Runge-Kutta methods (e.g. \cite[Chapter~IV.7]{Hairer:1996:ODE}).
We need a more specialized result and will 
for completeness provide an error estimate
for the Rosenbrock-Euler method in our setting. 
Consider the autonomous ODE 
\[
  y'(t)=f(y(t))
\]
with $\|y(0)\|=1$. 
Suppose the ODE has
a structure such that the norm is an invariant, i.e., $\|y(t)\|=1$ for all $t>0$. 
Let $\tilde{y}_1$ be one step of the Rosenbrock-Euler method.
By using Taylor
expansion, it is straightforward to show that 
the local error of the Rosenbrock-Euler step  is given by 
\[
  \tilde{y}_{1}-y(h)= h^2q+\frac{h^3}{6}\hat{y}'''(\tau)
\]
where
\begin{equation}
q:=-\frac{1}{2}(I-hf'(\tilde{y}_0)))^{-1}
(I+hf'(\tilde{y}_0)))f'(\tilde{y}_0)f(\tilde{y}_0).
\label{eq:qdef}
\end{equation}
and $\hat{y}'''(\tau)=(y'''(\tau_1)_1,\ldots,y'''(\tau_n)_n)^T$,
with $\tau_1,\ldots,\tau_n\in [0,h]$. 
We approximate $\|q\|\approx \frac12\|f'(\tilde{y}_0)f(\tilde{y}_0)\|$
and neglect the final term 
which leads us to the following choice of step-length
for a given error tolerance
\begin{equation}
  h=\sqrt{\frac{2\veps}{\|f'(\tilde{y}_0)f(\tilde{y}_0)\|}}.
\label{eq:h_choice_f}
\end{equation}



%
%

%
\end{document}